\newtheorem{theorem}{Theorem}[section]
\newtheorem{proposition}[theorem]{Proposition}
\newtheorem{lemma}[theorem]{Lemma}
\newtheorem{Not}[theorem]{Notation}
\theoremstyle{rem}
\theoremstyle{definition}
\newtheorem{definition}[theorem]{Definition}
\theoremstyle{construct}
\newtheorem{construct}[theorem]{Construction}
\theoremstyle{examp}
\newcommand\projective\mathbf
\newcommand\PP{\projective P}
\newcommand\OO{\mathcal O}
\newcommand\ZZ{\mathbb Z}
\newcommand\onto\twoheadrightarrow
\newcommand\lra\longrightarrow
\newcommand\dar\downarrow
\DeclareMathOperator{\pic}{Pic}
\DeclareMathOperator{\im}{im}
\DeclareMathOperator{\cok}{coker}
\DeclareMathOperator{\rk}{rank}
\DeclareMathOperator{\Hom}{Hom}
\begin{document}

\title{Monads on Cartesian products of projective spaces}
\author{Damian M Maingi}
\date{June, 2023}
\keywords{Monads, Cartesian product of spaces}

\address{Department of Mathematics\\ School of Physical Sciences\\ Chiromo Campus\\ Chiromo Way\\University of Nairobi\\P.O Box 30197, 00100 Nairobi, Kenya\\
Department of Mathematics\\Catholic University of Eastern Africa\\P.O Box 62157, 00200 Nairobi, Kenya}
\email{dmaingi@uonbi.ac.ke, dmaingi@cuea.edu, dmaingi@squ.edu.om}

\maketitle

\begin{abstract}
In this paper we establish the existence of monads on special Cartesian products of projective spaces. Special in the sense that they inject 
onto an odd dimension projective space.  We first construct monads on $\PP^1\times\cdots\times\PP^1\times\PP^3\times\cdots\times\PP^3\times\PP^5\times\cdots\times\PP^5$.
We then proceed to prove stability of the kernel bundle associated to the monad and simplicity of the cohomology vector bundle.
Lastly we establish the existence of monads on $\PP^{a_1}\times\cdots\times\PP^{a_n }$ where $a_1<a_2<\ldots<a_n$, alternating
even and odd or at least $a_i$ $0<i\leq{n}$ is odd.
\end{abstract}

\section{Introduction}

\noindent 
A monad of sheaves on a variety $X$ is a sequence of sheaves $\xymatrix{0\ar[r] & A \ar[r] & B \ar[r] & C \ar[r] & 0}$ on $X$
that happens to be exact at $A$ and at $C$. They are very important methods or tools to construct indecomposable vector bundles with prescribed invariants like rank, chern class etc. Monads appear in many contexts within algebraic geometry; 
first, Horrocks \cite{7} showed that all vector bundles $E$ on $\PP^2$ and $\PP^3$ admit double ended resolutions by line bundles which he called monads.
He proved that all vector bundles $E$ on $\PP^3$ could be obtained as the cohomology bundle of a monad of the form
\[
\xymatrix
{
0\ar[r] & \oplus_i\OO_{\PP^3}(a_i) \ar[r]^{f} & \oplus_j\OO_{\PP^3}(b_j) \ar[r]^{g} & \oplus_n\OO_{\PP^3}(c_n) \ar[r] & 0
}
\]
where $f$ and $g$ are matrices whose entries are homogeneous polynomials of degrees $b_j-a_i$ and $c_n-b_j$ respectively for some integers $i,j,n$.
Barth and Hulek\cite{2} reproved Horrocks showing the uniqueness of the monads obtained which are useful in construction of moduli of stable vector bundles.
A great source of motivation proceeds from Hartshorne's list\cite{5} of problems some of which inquire about the existence of low rank  indecomposable vector bundles.

\vspace{1cm}

\noindent The goal of this paper is the construction of monads on Cartesian products of projective spaces. 
The construction of monads on projective spaces $\PP^k$ was proved by Fl\o{}ystad\cite{4}. Marchesi et al \cite{15} generalized this further for more generalized projective varieties. 
Costa and Miro\cite{3} established existence of monads on smooth hyperquadrics. Maingi established the existence of monads on $\PP^{n}\times\PP^{m}$  \cite{10}, on $\PP^{2n+1}\times\PP^{2n+1}$  \cite{11},
he established monads on  $\PP^{a_1}\times\PP^{a_1}\times\PP^{a_2}\times\PP^{a_2}\times\cdots\times\PP^{a_n}\times\PP^{a_n}$ \cite{12} a generalization
of his work \cite{13} where he established existence of monads on $\PP^{n}\times\PP^{n}\times\PP^{m}\times\PP^{m}$ and recently \cite{14}
for a polarisation $\OO_X(\alpha_1,\cdots,\alpha_t)$ he established existence of monads on $X=\PP^{a_1}\times\cdots\times\PP^{a_n}$.
\vspace{1cm}

\noindent In this paper we establish the existence of monads 
\[
\begin{CD}
M_\bullet: 0@>>>\OO_{X}(-1,\cdots,-1)^{\oplus{k}}@>>^{f}>{\OO^{\oplus{2\mu}\oplus{2k}}_X} @>>^{g}>\OO_{X}(1,\cdots,1)^{\oplus{k}}  @>>>0\\
\end{CD}
\]
on a Cartesian product of spaces $X = \PP^1\times\cdots\times\PP^1\times\PP^3\times\cdots\times\PP^3\times\PP^5\times\cdots\times\PP^5$,
where $l,m,n,k$ are positive integers and $\mu=2^{l+2m+n-1}3^n-1$. We prove stability of the kernel bundle and simplicity of the cohomology bundle by analysis of the display diagram 
of the monad, twisting by line bundles and taking cohomology appropriately.\\
\\
\noindent Next we also look into the case of monads for Cartesian products $X = \PP^{a_1}\times\cdots\times\PP^{a_n}$ where $a_1\leq a_2\leq\cdots\leq a_n$ alternating even and odd or at least $a_i$ is odd for $0<i\leq n$.
We construct the monad \[
\begin{CD}
0@>>>\OO_{X}(-1,-1,-1)^{\alpha} @>>^{\overline{A}}>{\OO^{\beta}_{X}} @>>^{\overline{B}}>\OO_{X}(1,1,1)^{\gamma}  @>>>0\\
\end{CD}
\] on $\PP^1\times\PP^2\times\PP^3$.\\

\noindent As we mimick constructions on instanton bundle and generalize the results, the flow of the paper is somewhat similar to a paper by Ancona and Ottaviani \cite{1} where they proved that special instanton bundles on $\PP^{2n+1}$, 
papers by Maingi\cite{10,11,12,13,14}, here the methods used generalize methods previously used  by several authors for odd dimensional projective spaces.\\

\vspace{1cm}

\begin{Not}
\noindent Since the ambient space in this work is $\PP^1\times\cdots\times\PP^1\times\PP^3\times\cdots\times\PP^3\times\PP^5\times\cdots\times\PP^5$ then $\pic(X) \simeq \ZZ^{l+m+n}$.\\
We shall denote the generators of the Picard group of $X$, $\pic(X)$ by $f_i,g_j$ and $h_k$ where $i=1\cdots,l$, $j=1\cdots,m$ and $k=1\cdots,n$  \\
\\
Next, we denote by $\OO_X(f_1,\cdots,f_l,g_1,\cdots,g_m,h_1,\cdots,h_n):= {p_1}^*\OO_{\PP^1}(f_1)\otimes\cdots\otimes {p_l}^*\OO_{\PP^1}(f_l)\otimes
{q_1}^*\OO_{\PP^3}(g_1)\otimes\cdots\otimes {q_m}^*\OO_{\PP^3}(g_m)\otimes{s_1}^*\OO_{\PP^5}(h_1)\otimes\cdots\otimes {s_n}^*\OO_{\PP^5}(h_n)$,
where $p_i$ for $i=1,\cdots,l$ are natural projections  from $X$ onto $\PP^1$, 
$q_j$ for $j=1,\cdots,m$ are natural projections  from $X$ onto $\PP^3$,
$s_k$ for $k=1,\cdots,n$ are natural projections  from $X$ onto $\PP^5$.\\
\\
For the line bundle $\mathscr{L} = \OO_X(f_1,\cdots,f_l,g_1,\cdots,g_m,h_1,\cdots,h_m)$ on $X$ and a vector bundle $E$, we write 
$E(f_1,\cdots,f_l,g_1,\cdots,g_m,h_1,\cdots,h_n) = E\otimes\OO_X(f_1,\cdots,f_l,g_1,\cdots,g_m,h_1,\cdots,h_n)$  and
$\displaystyle{(f_1,\cdots,f_l,g_1,\cdots,g_m,h_1,\cdots,h_n):= \sum_{i=1}^l1\cdot[f_i\times\PP^1]+\sum_{j=1}^m1\cdot[g_j\times\PP^3]+\sum_{k=1}^n1\cdot[h_k\times\PP^5]}$ representing its corresponding divisor.\\
\\
\noindent We define the normalization of a vector bundle $E$ on $X$ with respect to a polarization $\mathscr{L}$ as follows:\\
\\
We have $\deg_{\mathscr{L}}(E(-k_E,0,\cdots,0))=\deg_{\mathscr{L}}(E)-(l+m+n)k\cdot \rk(E)$ and so we set $d=\deg_{\mathscr{L}}(\OO_X(1,0,\cdots,0))$,
there exists a unique integer $k_E$ given by  $k_E:=\lceil\mu_\mathscr{L}(E)/d\rceil$ such that  $1 - d.\rk(E)\leq \deg_\mathscr{L}(E(-k_E,0,\cdots,0))\leq0$. 
The $\mathscr{L}$-normalization of the bundle $E$ is  the twisted bundle $E_{{\mathscr{L}}-norm}:= E(-k_E,0,\cdots,0)$.\\
\\
The linear functional $\delta_{\mathscr{L}}$ on  $\mathbb{Z}^{l+m+n}$ is defined as 
$\delta_{\mathscr{L}}(f_1,\cdots,f_l,g_1,\cdots,g_m,h_1,\cdots,h_n):= \deg_{\mathscr{L}}\OO_{X}(f_1,\cdots,f_l,g_1,\cdots,g_m,h_1,\cdots,h_n)$.\\
\\
In many cases for the sake of brevity we shall use the notation $H^p(\mathscr{E})$ in place of $H^p(X,\mathscr{E})$ for the $p^{-th}$ cohomology group. 
\end{Not}

\section{Preliminaries}

\noindent In this section we give a set up for the tools that we use in the entire paper. 
Most of the definitions can be found in Chapter 2 of the book by Okonek C et al\cite{16}.

\begin{definition}
Let $X$ be a nonsingular projective variety. 
\begin{enumerate}
\renewcommand{\theenumi}{\alph{enumi}}
 \item A {\it{monad}} on $X$ is a complex of vector bundles:
\[
\xymatrix{0\ar[r] & M_0 \ar[r]^{\alpha} & M_1 \ar[r]^{\beta} & M_2 \ar[r] & 0}
\]
which is exact at $M_0$ and at $M_2$ i.e. $\alpha$ is injective and $\beta$ surjective.
\item A monad as defined above has a display diagram of short exact sequences as shown below:
\[
\begin{CD}
@.@.0@.0\\
@.@.@VVV@VVV\\
0@>>>{M_0} @>>>\ker{\beta}@>>>E@>>>0\\
@.||@.@VVV@VVV\\
0@>>>{M_0} @>>^{\alpha}>{M_1}@>>>\cok{\alpha}@>>>0\\
@.@.@V^{\beta}VV@VVV\\
@.@.{M_2}@={M_2}\\
@.@.@VVV@VVV\\
@.@.0@.0
\end{CD}
\]
\item The vector bundle $E = \ker(\beta)/\im (\alpha)$ and is called the cohomology bundle of the monad.
\end{enumerate}
\end{definition}

\begin{definition}
Let $X$ be a non-singular irreducible projective variety of dimension $d$ and let $\mathscr{L}$ be an ample line bundle on $X$. For a 
torsion-free sheaf $F$ on $X$ we define
\begin{enumerate}
\renewcommand{\theenumi}{\alph{enumi}}
 \item the degree of $F$ relative to $\mathscr{L}$ as $\deg_{\mathscr{L}}F:= c_1(F)\cdot \mathscr{L}^{d-1}$, where $c_1(F)$ is the first Chern class of $F$
 \item the slope of $F$ as $\mu_{\mathscr{L}}(F):= \frac{\deg_{\mathscr{L}}F}{rk(F)}$.
  \end{enumerate}
\end{definition}

\subsection{Hoppe's Criterion over polycyclic varieties.}
Suppose that the Picard group Pic$(X) \simeq \ZZ^l$ where $l\geq2$ is an integer then $X$ is a polycyclic variety.
Given a divisor $B$ on $X$ we define $\delta_{\mathscr{L}}(B):= \deg_{\mathscr{L}}\OO_{X}(B)$.
Then one has the following stability criterion which is a generalization of Hoppe's criterion  for a cyclic variety\cite{6}.

\begin{theorem}[Generalized Hoppe Criterion, \cite{9}, Theorem 3]
 Let $G\rightarrow X$ be a holomorphic vector bundle of rank $r\geq2$ over a polycyclic variety $X$ equiped with a polarisation 
 $\mathscr{L}$ if
 \[H^0(X,(\wedge^sG)\otimes\OO_X(B))=0\] 
 for all $B\in\pic(X)$ and $s\in\{1,\ldots,r-1\}$ such that
 $\begin{CD}\displaystyle{\delta_{\mathscr{L}}(B)<-s\mu_{\mathscr{L}}(G)}\end{CD}$ then $G$ is stable and if
 $\begin{CD}\displaystyle{\delta_{\mathscr{L}}(B)\leq-s\mu_{\mathscr{L}}(G)}\end{CD}$ then $G$ is semi-stable.\\
\\
 Conversely if then $G$ is (semi-)stable then  \[H^0(X,G\otimes\OO_X(B))=0\]
 for all $B\in\pic(X)$ and all $s\in\{1,\ldots,r-1\}$ such that
 $\left(\delta_{\mathscr{L}}(B)\leq\right)$ $\delta_{\mathscr{L}}(B)<-s\mu_{\mathscr{L}}(G)$.
\end{theorem}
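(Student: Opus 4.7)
The plan is to deduce this from the Mumford--Takemoto definition of $\mathscr{L}$-(semi)stability by translating the slope inequality on saturated subsheaves into an $H^0$-vanishing condition on exterior powers, as in Hoppe's original cyclic argument. First I would recall that $G$ is $\mathscr{L}$-stable (resp.\ semistable) if and only if every coherent subsheaf $F \subset G$ with $0 < \rk(F) = s < \rk(G) = r$ satisfies $\mu_{\mathscr{L}}(F) < \mu_{\mathscr{L}}(G)$ (resp.\ $\leq$). Since the slope is non-decreasing under saturation, I can restrict attention to saturated reflexive $F$ whose determinant $\det(F) = (\wedge^s F)^{**}$ is an honest line bundle on $X$; writing $\det(F) = \OO_X(-B)$ for some $B \in \pic(X)$, one has $\mu_{\mathscr{L}}(F) = -\delta_{\mathscr{L}}(B)/s$.

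For the converse (``easy'') direction, the inclusion $F \hookrightarrow G$ induces a nonzero morphism $\wedge^s F \to \wedge^s G$, which on taking reflexive hulls of the source becomes a nonzero map $\OO_X(-B) \to \wedge^s G$, i.e.\ a nonzero element of $H^0(X, (\wedge^s G) \otimes \OO_X(B))$. The slope inequality $\mu_{\mathscr{L}}(F) \geq \mu_{\mathscr{L}}(G)$ translates into $\delta_{\mathscr{L}}(B) \leq -s\,\mu_{\mathscr{L}}(G)$, strictly so precisely when $F$ strictly destabilises $G$. Hence (semi)stability of $G$ forces the claimed $H^0$-vanishing for the matching strict or non-strict version of the inequality.

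The forward direction contains what I expect to be the principal obstacle: from a nonzero section $\sigma \in H^0((\wedge^s G) \otimes \OO_X(B))$ one must actually produce a rank-$s$ saturated subsheaf $F \subset G$ with $\OO_X(-B) \hookrightarrow \det(F)$. The standard device is a Pl\"ucker-type construction: $\sigma$ corresponds to a nonzero map $\OO_X(-B) \to \wedge^s G$ whose value at the generic point $\eta$ of $X$, restricted to the dense open locus on which it is decomposable, determines a rank-$s$ subspace of $G_\eta$; taking its saturation inside $G$ (equivalently, closure in the relative Grassmann bundle of $G$) yields the required $F$, and a short calculation gives $\det(F) \supseteq \OO_X(-B)$, so $\mu_{\mathscr{L}}(F) \geq -\delta_{\mathscr{L}}(B)/s \geq \mu_{\mathscr{L}}(G)$, contradicting the assumed (semi)stability. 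The only substantive novelty over the cyclic case is that $B$ now ranges over the lattice $\pic(X) \simeq \ZZ^l$ rather than $\ZZ$, tested against the functional $\delta_{\mathscr{L}}$ in place of the plain degree; the Pl\"ucker step and the pairing of strict/non-strict inequalities with stable/semistable are unchanged.
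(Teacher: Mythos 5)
This theorem is quoted verbatim from [9] (Jardim--Menet--Prata--S\'a Earp, Theorem 3); the paper supplies no proof of it, so your sketch has to stand on its own. Its sound half is the ``subsheaf $\Rightarrow$ section'' mechanism: a saturated destabilising subsheaf $F\subset G$ of rank $s$ with $\det F=\OO_X(-B)$ yields a nonzero element of $H^0(X,(\wedge^sG)\otimes\OO_X(B))$ with $\delta_{\mathscr{L}}(B)\le -s\mu_{\mathscr{L}}(G)$, strictly so when $F$ strictly destabilises. But notice that this establishes the \emph{forward} implication of the theorem --- vanishing implies (semi)stability, by contraposition --- and not the converse as you label it; your assignment of the two constructions to the two directions is swapped. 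If you chase your own inequality translation through, it is the non-strict test $\delta_{\mathscr{L}}(B)\le -s\mu_{\mathscr{L}}(G)$ that must be imposed to conclude stability and the strict one for semistability; the pairing printed in the paper's statement is the reverse of this, which is worth flagging rather than reproducing.

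The genuine gap is the Pl\"ucker step you propose for the remaining direction. A nonzero section of $(\wedge^sG)\otimes\OO_X(B)$ is, at the generic point, an arbitrary element of $\wedge^sG_\eta$, and for $1<s<r$ such an element is in general \emph{not} decomposable on any dense open locus, so it does not determine a rank-$s$ subspace of $G_\eta$ and the saturation/Grassmannian-closure argument cannot start. This failure of decomposability is exactly why Hoppe-type criteria are only \emph{sufficient} conditions for stability when $s>1$, and why the converse in the statement is asserted only for $H^0(X,G\otimes\OO_X(B))$ --- the case $s=1$, where the image of a nonzero map $\OO_X(-B)\to G$ is automatically a rank-one subsheaf and the slope comparison with its saturation goes through with no decomposability issue. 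Restricting your converse to $s=1$ repairs it; as written, the Pl\"ucker step would fail.
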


\vspace{1cm}

\noindent Next we customize existing results for the purposes of our work herein specifically to Cartesian products projectives.

\begin{proposition}
Given a polycyclic variety $X$ that has Picard number $l+m+n$, an ample line bundle $\mathscr{L}$ and a holomorphic vector bundle $T$ of rank $r>1 $ over $X$,\\
if $H^0(X,(\bigwedge^q T)_{{\mathscr{L}}-norm}(f_1,\cdots,f_{l},g_1,\cdots,g_m,h_1,\cdots,h_n)) = 0$ for $1\leq q \leq r-1$ and every $(f_1,\cdots,f_l,g_1,\cdots,g_m,h_1,\cdots,h_n)\in \mathbb{Z}^{l+m+n}$ such that $\delta_{\mathscr{L}}\leq0$
then $T$ is $\mathscr{L}$-stable.
\end{proposition}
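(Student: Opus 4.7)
The plan is to reduce the proposition directly to the Generalized Hoppe Criterion (Theorem 2.3), since the two statements differ only in how the twisting divisor is parameterized: the proposition phrases everything in terms of the $\mathscr{L}$-normalization of $\wedge^q T$, whereas Hoppe's Criterion phrases everything in terms of the slope inequality $\delta_{\mathscr{L}}(B) < -q\mu_{\mathscr{L}}(T)$. The task, therefore, is to translate between these two parameterizations.

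Fix $q \in \{1,\ldots,r-1\}$ and let $k_q := \lceil q\mu_{\mathscr{L}}(T)/d \rceil$, the normalization integer attached to $\wedge^q T$, so that by definition $(\wedge^q T)_{\mathscr{L}-norm} = (\wedge^q T)(-k_q,0,\ldots,0)$. Given any divisor $B = (b_1,\ldots,b_{l+m+n}) \in \mathbb{Z}^{l+m+n}$, I would rewrite
\[
(\wedge^q T) \otimes \OO_X(B) \;=\; (\wedge^q T)_{\mathscr{L}-norm}\bigl(b_1+k_q,\,b_2,\ldots,b_{l+m+n}\bigr).
\]
Setting $(f_1,\ldots,h_m) := (b_1+k_q,b_2,\ldots,b_{l+m+n})$, a direct computation gives $\delta_{\mathscr{L}}(f_1,\ldots,h_m) = \delta_{\mathscr{L}}(B) + k_q d$. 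The ceiling in the definition of $k_q$ guarantees $k_q d \geq q\mu_{\mathscr{L}}(T)$; combined with the integrality of $\delta_{\mathscr{L}}$ on $\mathbb{Z}^{l+m+n}$, one verifies that the slope condition $\delta_{\mathscr{L}}(B) < -q\mu_{\mathscr{L}}(T)$ forces $\delta_{\mathscr{L}}(f_1,\ldots,h_m) \leq 0$. The hypothesis of the proposition then delivers $H^0\bigl((\wedge^q T) \otimes \OO_X(B)\bigr) = 0$. Letting $q$ and $B$ range over the admissible values supplies exactly the vanishing needed by Theorem 2.3, and $\mathscr{L}$-stability of $T$ follows.

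The main obstacle, in my view, is the delicate matching of bounds in the translation step: on one side we have Hoppe's strict slope inequality against a rational quantity $q\mu_{\mathscr{L}}(T)$, while on the other we have the proposition's integer bound $\delta_{\mathscr{L}} \leq 0$. Reconciling these requires a careful use of both the integrality of $\delta_{\mathscr{L}}$ and the slack inherent in the ceiling $\lceil q\mu_{\mathscr{L}}(T)/d\rceil$; once that bookkeeping is in place, the rest of the argument is a straightforward invocation of Hoppe's criterion.
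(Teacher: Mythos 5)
The paper itself offers no proof of this proposition --- it is stated as a ``customization'' of the Generalized Hoppe Criterion (Theorem 2.3) --- so your strategy of reducing to that theorem by translating between the normalization parameterization and the slope inequality is exactly the intended route. The problem is that the translation step, which you yourself single out as ``the main obstacle,'' is asserted rather than verified, and as justified it does not go through. You need to show that every $B$ with $\delta_{\mathscr{L}}(B) < -q\mu_{\mathscr{L}}(T)$ satisfies $\delta_{\mathscr{L}}(B) + k_q d \le 0$, i.e.\ $\delta_{\mathscr{L}}(B) \le -k_q d$. But the ceiling gives $k_q d \ge q\mu_{\mathscr{L}}(T)$, i.e.\ $-k_q d \le -q\mu_{\mathscr{L}}(T)$, so the threshold you must reach, $-k_q d$, lies \emph{below} the threshold you are given, $-q\mu_{\mathscr{L}}(T)$; the inequality points the wrong way. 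Quantitatively, $k_q d - q\mu_{\mathscr{L}}(T)$ can be as large as $d - 1/\rk(\wedge^q T)$, so all that follows from your two facts is $\delta_{\mathscr{L}}(B) + k_q d < d$, and integrality of $\delta_{\mathscr{L}}$ only improves this to $\delta_{\mathscr{L}}(B) + k_q d \le d-1$. That is $\le 0$ only when $d = \deg_{\mathscr{L}}\OO_X(1,0,\cdots,0) = 1$, which fails badly for the Segre polarization used here (for $\PP^1\times\PP^3\times\PP^5$ one computes $d = 8!/(3!\,5!) = 56$).

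The gap is repairable in the paper's actual setting, but only by using a divisibility property strictly stronger than integrality: one must check that $\delta_{\mathscr{L}}$ takes values in $d\ZZ$, i.e.\ that $d$ divides the $\mathscr{L}$-degree of every generator of $\pic(X)$. For $X=\prod\PP^{a_i}$ with $L=\sum$ of the hyperplane classes, the generator degrees are $a_i\cdot(D-1)!/\prod_j a_j!$ with $D=\dim X$, so this holds precisely because the normalization twists along a $\PP^1$ factor ($a_1=1$ divides every $a_i$). Once $\delta_{\mathscr{L}}(B)\in d\ZZ$, the condition $\delta_{\mathscr{L}}(B) < -q\mu_{\mathscr{L}}(T)$ does force $\delta_{\mathscr{L}}(B) \le -d\lceil q\mu_{\mathscr{L}}(T)/d\rceil = -k_q d$, and your reduction closes. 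You should state and prove this divisibility explicitly; without it the argument is invalid, and for the proposition in the generality in which it is phrased (an arbitrary polycyclic variety with an arbitrary ample $\mathscr{L}$) no such divisibility is available, so the reduction to Hoppe genuinely fails there.
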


\begin{proposition}
Let $0\rightarrow E \rightarrow F \rightarrow G\rightarrow0$ be an exact sequence of vector bundles.
Then we have the following exact sequence involving exterior and symmetric powers
\[0\lra\bigwedge^q E \lra\bigwedge^q F \lra\bigwedge^{q-1} F\otimes G\lra\cdots \lra F\otimes S^{q-1}G \lra S^{q}G\lra0\]
\end{proposition}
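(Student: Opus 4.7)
The plan is to construct the differentials of the claimed complex globally and then verify exactness by reducing to the case where the short exact sequence $0\to E\to F\to G\to 0$ splits, which holds on a neighbourhood of every point of $X$. Since exactness of a complex of locally free sheaves may be checked stalk-wise, this reduction loses nothing.

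First I would set up the maps. Denote the surjection by $\pi : F \onto G$. The comultiplication on the exterior algebra gives a natural map $\bigwedge^{q-p} F \to \bigwedge^{q-p-1} F \otimes F$; composing on the second factor with $\pi$ and then with the symmetric multiplication $G \otimes S^p G \to S^{p+1} G$ produces
\[
d_p : \bigwedge^{q-p} F \otimes S^p G \lra \bigwedge^{q-p-1} F \otimes S^{p+1} G
\]
for $0 \le p \le q-1$. A routine sign check gives $d_{p+1}\circ d_p = 0$. The leftmost map is the inclusion $\bigwedge^q E \hookrightarrow \bigwedge^q F$, whose image lies in $\ker d_0$ because $\pi$ vanishes on $E$; at the right end $d_{q-1}$ lands in $\bigwedge^0 F \otimes S^q G = S^q G$ and is surjective because $\pi$ is.

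Exactness in the middle I would prove locally. On an open $U\subset X$ over which the sequence splits, write $F\simeq E\oplus G$ and use the canonical decomposition $\bigwedge^k F \simeq \bigoplus_{i+j=k} \bigwedge^i E \otimes \bigwedge^j G$. Because $\pi$ vanishes on $E$, this decomposition is respected by each $d_p$. Collecting summands by the exterior index $i$ of $E$, the whole complex, including the external inclusion, breaks up as a direct sum, indexed by $0\le i\le q$, of complexes of the form
\[
\bigwedge^i E \otimes \Bigl(\,0 \to \bigwedge^{q-i} G \to \bigwedge^{q-i-1} G \otimes G \to \cdots \to G \otimes S^{q-i-1} G \to S^{q-i} G \to 0\,\Bigr),
\]
with the convention that, for $i=q$, the external term $\bigwedge^q E$ at the far left maps by the identity onto the surviving summand $\bigwedge^q E\otimes\bigwedge^0 G$ of $\bigwedge^q F$. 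For $0 \le i<q$ the bracketed factor is the classical Koszul complex of the identity on the locally free sheaf $G$, well known to be acyclic of length $q-i\ge 1$. For $i=q$ the two copies of $\bigwedge^q E$ cancel via the identity. Summing up, the total complex is exact.

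The main obstacle is the sign bookkeeping: one must verify that $d_p$ restricted to a summand $\bigwedge^i E \otimes \bigwedge^j G \otimes S^p G$ (with $i+j=q-p$) indeed lands in $\bigwedge^i E \otimes \bigwedge^{j-1} G \otimes S^{p+1} G$ and agrees there with the Koszul differential on $G$. Once this is pinned down, the acyclicity of the Koszul complex of an identity map on a locally free sheaf is standard, and global exactness follows from the local verification.
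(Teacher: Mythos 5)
Your proof is correct. Note, however, that the paper itself offers no proof of this proposition: it is stated among the preliminaries as a standard fact (Section 2 defers to Chapter 2 of Okonek--Schneider--Spindler; the statement is also Exercise II.5.16(d) in Hartshorne), so there is no argument of the author's to compare yours against line by line. The route you take --- define the differentials via the exterior comultiplication composed with $\pi$ and the symmetric product, observe that exactness of a complex of locally free sheaves is a local question, split the sequence on a neighbourhood of each point, and decompose each $\bigwedge^{q-p}F\otimes S^pG$ by the $E$-degree $i$ so that the complex becomes a direct sum over $i$ of $\bigwedge^iE$ tensored with the degree-$(q-i)$ strand of the Koszul complex of $\mathrm{id}_G$ --- is the standard textbook proof, and your treatment of the boundary case $i=q$ (the external $\bigwedge^qE$ cancelling the summand $\bigwedge^qE\otimes\bigwedge^0G$ of $\bigwedge^qF$) is exactly the right way to account for the extra term at the far left. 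The only points you leave implicit are genuinely routine: the sign verification that $d_{p+1}\circ d_p=0$, the identification of the restricted differential with the Koszul differential on the $G$-factor, and the acyclicity of the strands $0\to\bigwedge^nG\to\bigwedge^{n-1}G\otimes G\to\cdots\to S^nG\to0$ for $n\ge1$, which holds in any characteristic since it is the degree-$n$ graded piece of the Koszul resolution of the residue field by a regular sequence of linear forms.
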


\begin{theorem}[K\"{u}nneth formula]
 Let $X$ and $Y$ be projective varieties over a field $k$. 
 Let $\mathscr{F}$ and $\mathscr{G}$ be coherent sheaves on $X$ and $Y$ respectively.
 Let $\mathscr{F}\boxtimes\mathscr{G}$ denote $p_1^*(\mathscr{F})\otimes p_2^*(\mathscr{G})$\\
 then $\displaystyle{H^m(X\times Y,\mathscr{F}\boxtimes\mathscr{G}) \cong \bigoplus_{p+q=m} H^p(X,\mathscr{F})\otimes H^q(Y,\mathscr{G})}$.
\end{theorem}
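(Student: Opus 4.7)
The plan is to prove the K\"unneth isomorphism via \v{C}ech cohomology, reducing a sheaf-theoretic statement to the purely algebraic K\"unneth theorem for complexes of vector spaces over a field. First I would choose finite affine open covers $\mathcal{U}=\{U_i\}_{i\in I}$ of $X$ and $\mathcal{V}=\{V_j\}_{j\in J}$ of $Y$; these exist because $X,Y$ are projective. Since $X$ and $Y$ are separated, all finite intersections $U_{i_0\cdots i_p}$ and $V_{j_0\cdots j_q}$ remain affine, and by Serre's vanishing on affines, the \v{C}ech cohomology $\check H^\bullet(\mathcal{U},\mathscr{F})$ computes the sheaf cohomology $H^\bullet(X,\mathscr{F})$, and similarly for $Y$.

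Next I would form the product cover $\mathcal{W}=\{U_i\times V_j\}_{(i,j)\in I\times J}$ of $X\times Y$, which is again an affine open cover. Since $X\times Y$ is projective (hence separated), and $\mathscr{F}\boxtimes\mathscr{G}$ is coherent (thus quasi-coherent), the same Serre vanishing gives $\check H^\bullet(\mathcal{W},\mathscr{F}\boxtimes\mathscr{G})\cong H^\bullet(X\times Y,\mathscr{F}\boxtimes\mathscr{G})$. The crucial identification is then at the level of complexes: for affine opens $U\subset X$ and $V\subset Y$ with associated modules $M=\Gamma(U,\mathscr{F})$ and $N=\Gamma(V,\mathscr{G})$, one has
\[
\Gamma(U\times V,\mathscr{F}\boxtimes\mathscr{G})\;\cong\;M\otimes_k N,
\]
and assembling over all indices gives an isomorphism of total complexes
\[
C^\bullet(\mathcal{W},\mathscr{F}\boxtimes\mathscr{G})\;\cong\;\operatorname{Tot}\bigl(C^\bullet(\mathcal{U},\mathscr{F})\otimes_k C^\bullet(\mathcal{V},\mathscr{G})\bigr),
\]
with the correct Koszul sign rule in the differential.

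Finally I would invoke the algebraic K\"unneth theorem for a tensor product of complexes over a field: because every $k$-vector space is flat, all Tor terms vanish, and one obtains
\[
H^m\bigl(\operatorname{Tot}(A^\bullet\otimes_k B^\bullet)\bigr)\;\cong\;\bigoplus_{p+q=m}H^p(A^\bullet)\otimes_k H^q(B^\bullet)
\]
for any pair of complexes $A^\bullet,B^\bullet$ of $k$-vector spaces. Applying this with $A^\bullet=C^\bullet(\mathcal{U},\mathscr{F})$ and $B^\bullet=C^\bullet(\mathcal{V},\mathscr{G})$ and combining with the previous two identifications yields the desired formula.

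The main obstacle is the middle step, namely verifying the isomorphism of \v{C}ech double complexes together with its sign conventions, and ensuring that the quasi-coherent hypothesis suffices to split sections over a product of affines as a tensor product of sections (which uses the explicit construction of $\boxtimes$ from pullbacks via the flat projections $p_1,p_2$ and the fact that for affine opens the functor $\Gamma$ commutes with pullback and tensor product of associated modules). The algebraic step and the Serre vanishing step are standard and essentially formal once this identification is in hand.
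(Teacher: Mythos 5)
The paper offers no proof of this statement at all: it is quoted as a standard preliminary (Theorem 2.5), implicitly deferred to the literature, and is only used later through Lemma 2.8 to split cohomology of line bundles on the product into tensor products of cohomology on the factors. Your \v{C}ech-theoretic argument is the standard textbook proof of exactly this statement, and its three pillars are all sound: Leray's theorem for a finite affine cover of a separated scheme, the identification $\Gamma(U\times V,\mathscr{F}\boxtimes\mathscr{G})\cong M\otimes_k N$ over affines (which is just $(M\otimes_A(A\otimes_kB))\otimes_{A\otimes_kB}((A\otimes_kB)\otimes_BN)\cong M\otimes_kN$), and the algebraic K\"unneth theorem with vanishing Tor over a field. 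The one place where your write-up is imprecise is the middle identification: the degree-$m$ term of $C^\bullet(\mathcal{W},\mathscr{F}\boxtimes\mathscr{G})$ is indexed by $(m+1)$-tuples of \emph{pairs} $(i,j)$, so it is the diagonal of the product cosimplicial object, not literally $\bigoplus_{p+q=m}C^p(\mathcal{U},\mathscr{F})\otimes_kC^q(\mathcal{V},\mathscr{G})$; the two are related by a chain homotopy equivalence (Eilenberg--Zilber / Alexander--Whitney), not an isomorphism of complexes. Since you explicitly flag this step as the main obstacle and a quasi-isomorphism is all the argument needs, this is a repairable imprecision rather than a gap; alternatively you could work directly with the double complex $\prod\Gamma(U_{i_0\cdots i_p}\times V_{j_0\cdots j_q},\mathscr{F}\boxtimes\mathscr{G})$ and compare its two spectral sequences. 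Either way, your route supplies an actual proof where the paper supplies none.
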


\begin{lemma}
Let $\PP^1\times\cdots\times\PP^1\times\PP^3\times\cdots\times\PP^3\times\PP^5\times\cdots\times\PP^5$ then\\
$\displaystyle{H^t(X,\OO_X (f_1,\cdots,f_l,g_1,\cdots,g_m,h_1,\cdots,h_n))\cong \bigoplus_{\sum_{i=1}^l{x_i}+\sum_{j=1}^m{y_j}+\sum_{k=1}^l{z_k}=t} X\otimes Y\otimes Z}$
\\
Where $X=H^{x_1}(\PP^1,\OO_{\PP^1}(f_1))\otimes \cdots \otimes H^{x_l}(\PP^1,\OO_{\PP^1}(f_l))$ \\
$Y=H^{y_1}(\PP^3,\OO_{\PP^3}(g_1))\otimes \cdots \otimes H^{y_m}(\PP^3,\OO_{\PP^3}(g_m))$ \\
$Z=H^{z_1}(\PP^5,\OO_{\PP^5}(h_1))\otimes \cdots \otimes H^{z_n}(\PP^5,\OO_{\PP^5}(h_n))$ 
 \end{lemma}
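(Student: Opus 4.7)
The plan is to derive this statement from the two-factor K\"{u}nneth formula (stated just above) by iteration on the number of factors of $X$. I would first note that, by the very definition given in the Notation section, the line bundle $\OO_X(f_1,\ldots,f_l,g_1,\ldots,g_m,h_1,\ldots,h_n)$ is the external tensor product of the pullbacks $p_i^*\OO_{\PP^1}(f_i)$, $q_j^*\OO_{\PP^3}(g_j)$, $s_k^*\OO_{\PP^5}(h_k)$ along the respective projections. In particular, grouping the last factor against the product of all the preceding ones, it has the form $\mathscr{F}'\boxtimes \OO_{\PP^5}(h_n)$, where $\mathscr{F}'$ is the analogous line bundle on the product of the first $N-1$ factors and $N := l+m+n$.

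Next I would proceed by induction on $N$. The base case $N=2$ is exactly the K\"{u}nneth formula stated in the theorem above, applied to the two factors with the respective $\OO(a)$ and $\OO(b)$. For the inductive step, write $X = V' \times V''$ with $V'' = \PP^5$ the last factor and $V'$ the product of the remaining $N-1$ factors. A single application of K\"{u}nneth yields
\[
H^t(X, \mathscr{F}' \boxtimes \OO_{\PP^5}(h_n)) \cong \bigoplus_{p + z_n = t} H^p(V', \mathscr{F}') \otimes H^{z_n}(\PP^5, \OO_{\PP^5}(h_n)).
\]
Substituting the inductive hypothesis for each $H^p(V', \mathscr{F}')$ then produces the claimed multi-index decomposition, with the new index $z_n$ absorbed into the overall sum $\sum x_i + \sum y_j + \sum z_k = t$.

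There is no genuine analytic or geometric obstacle here; the entire content of the lemma is the iterated K\"{u}nneth decomposition. The only point requiring care is bookkeeping: one must confirm that associativity of the external tensor product and of the direct sum interacts correctly with the multi-index sum, and that the grouping in the inductive step is independent of the order in which the factors are peeled off. Both are routine, so I would present the argument compactly and let the reader verify these checks.
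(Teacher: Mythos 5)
Your proposal is correct and is exactly the argument the paper intends: the lemma is stated immediately after the two-factor K\"{u}nneth theorem and the paper offers no further proof, treating it as the iterated application of that formula. Your induction on the number of factors, peeling off one projective space at a time and absorbing the new cohomological index into the multi-index sum, simply makes that implicit argument explicit, so there is nothing to add beyond the bookkeeping you already flag as routine.
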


\begin{theorem}[\cite{17}, Theorem 4.1]
 Let $n\geq1$ be an integer  and $d$ be an integer. We denote by $S_d$ the space of homogeneous polynomials of degree $d$ in 
 $n+1$ variables (conventionally if $d<0$ then $S_d=0$). Then the following statements are true:
 \begin{enumerate}
 \renewcommand{\theenumi}{\alph{enumi}}
  \item $H^0(\PP^n,\OO_{\PP^n}(d))=S_d$ for all $d$.
  \item $H^i(\PP^n,\OO_{\PP^n}(d))=0$ for $1<i<n$ and for all $d$.
  \item $H^n(\PP^n,\OO_{\PP^n}(d))\cong H^0(\PP^n,\OO_{\PP^n}(-d-n-1))$.
 \end{enumerate}
\end{theorem}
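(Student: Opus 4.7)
The plan is to compute $H^i(\PP^n,\OO_{\PP^n}(d))$ for all $i$ and all $d$ simultaneously by running through the \v{C}ech complex of the standard affine cover. I fix $\mathfrak{U} = \{U_i\}_{i=0}^n$ with $U_i = D_+(x_i)$. Since each finite intersection $U_{i_0\cdots i_p}$ is affine, Serre's theorem identifies the \v{C}ech cohomology of $\OO_{\PP^n}(d)$ with respect to $\mathfrak{U}$ with sheaf cohomology. Setting $R = k[x_0,\ldots,x_n]$, the sections of $\OO_{\PP^n}(d)$ on such an intersection form the degree-$d$ graded piece of the localization $R[(x_{i_0}\cdots x_{i_p})^{-1}]$, with $k$-basis the monomials $x_0^{a_0}\cdots x_n^{a_n}$ of total degree $d$ subject to $a_j\geq 0$ whenever $j\notin\{i_0,\ldots,i_p\}$.

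Parts (a) and (c) then fall out of direct monomial bookkeeping. For (a), $H^0$ is the kernel of $\check{C}^0\to\check{C}^1$; compatibility across the overlaps forces each summand to lie in $R_d$ itself, and so $H^0 = R_d = S_d$. For (c), the top cochain group $\check{C}^n$ consists of all degree-$d$ monomials in the full Laurent algebra $R[(x_0\cdots x_n)^{-1}]$, while the image of $\check{C}^{n-1}\to\check{C}^n$ is spanned exactly by monomials in which at least one exponent is nonnegative. Hence $H^n$ has a $k$-basis of monomials with all $a_j\leq -1$; the substitution $a_j = -1 - b_j$ matches these with monomials of degree $-d-n-1$ in $n+1$ variables, yielding the claimed identification with $S_{-d-n-1}$.

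For part (b) (the intermediate vanishing, which I read as $0<i<n$, the $1<i<n$ printed in the statement being almost certainly a typo) I would proceed by induction on $n$. The base $n=1$ is vacuous. For the inductive step, tensoring $0\to\OO_{\PP^n}(-1)\to\OO_{\PP^n}\to\OO_H\to 0$ by $\OO_{\PP^n}(d)$, where $H\cong\PP^{n-1}$ is a coordinate hyperplane, gives a long exact sequence in cohomology; combining the inductive hypothesis on $\PP^{n-1}$ with parts (a) and (c) in both dimensions, and running a secondary induction on $d$, forces the intermediate groups on $\PP^n$ to vanish. The principal obstacle is the monomial combinatorics behind (c), namely verifying carefully that every monomial with at least one nonnegative exponent actually lies in the image of the \v{C}ech differential with the correct alternating signs; once that step is settled, the remainder of the argument is essentially formal.
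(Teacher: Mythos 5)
The paper gives no proof of this statement at all: it is quoted as a known result from Perrin \cite{16}, so there is no internal argument to measure yours against. What you propose is the classical \v{C}ech computation over the standard affine cover (Serre's original calculation, the one reproduced in Perrin and in Hartshorne III.5.1), and for parts (a) and (c) your outline is correct --- the monomial bookkeeping you defer is exactly the content of those parts, and your reading of (b) as $0<i<n$ rather than the printed $1<i<n$ is the standard (and stronger, still true) form. The one place where your sketch has a genuine gap is the inductive argument for (b). The hyperplane sequence $0\to\OO_{\PP^n}(d-1)\to\OO_{\PP^n}(d)\to\OO_H(d)\to 0$ gives isomorphisms $H^i(\OO_{\PP^n}(d-1))\cong H^i(\OO_{\PP^n}(d))$ from the inductive hypothesis only in the interior range $2\le i\le n-2$; at the edges $i=1$ and $i=n-1$ the flanking terms $H^0(\OO_H(d))$ and $H^{n-1}(\OO_H(d))$ do not vanish, so you must explicitly verify that $H^0(\OO_{\PP^n}(d))\to H^0(\OO_H(d))$ is surjective (restriction of degree-$d$ forms) and that the connecting map $H^{n-1}(\OO_H(d))\to H^n(\OO_{\PP^n}(d-1))$ is injective (under the identifications of part (c) it is the inclusion of $S_{-d-n}$ in $n$ variables into $S_{-d-n}$ in $n+1$ variables). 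Moreover, even after you have $H^i(\OO_{\PP^n}(d-1))\cong H^i(\OO_{\PP^n}(d))$ for all $d$, independence of $d$ alone proves nothing: your ``secondary induction on $d$'' needs an anchor, a single twist $d_0$ for which the middle cohomology is known to vanish, and producing one requires either Serre's vanishing theorem or a direct \v{C}ech calculation for that twist. Since you are already committed to the \v{C}ech complex for (a) and (c), the cleaner and fully self-contained route is to prove (b) in the same framework, by checking exactness of the (direct sum over all $d$ of the) \v{C}ech complex in the middle degrees via the same monomial decomposition; that is how the cited source and Hartshorne actually argue, and it eliminates both the edge-case analysis and the need for an anchor.
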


\begin{lemma}
If  $\displaystyle{\sum_{i=1}^lf_i>}0$, $\displaystyle{\sum_{i=1}^mg_i>}0$, $\displaystyle{\sum_{i=1}^n h_i>}0$ then\\
$h^p(X,\OO_X (-f_1,\cdots,-f_l,-g_1,\cdots,-g_m,-h_1,\cdots,-h_n)^{\oplus k}) = 0$ where \\
$\PP^1\times\cdots\times\PP^1\times\PP^3\times\cdots\times\PP^3\times\PP^5\times\cdots\times\PP^5$ and for $0\leq p< \dim(X)$, for $k$ a positive integer.
\end{lemma}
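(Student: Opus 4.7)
The plan is to apply the K\"{u}nneth decomposition of the preceding lemma to reduce the vanishing on $X$ to line bundle cohomology on the individual projective factors, where the preceding cohomology theorem for $\PP^n$ then controls what can survive. The $k$-fold direct sum plays no role in the vanishing, so I shall work with a single copy throughout.

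First I would apply K\"{u}nneth to write
\[
H^p\bigl(X,\OO_X(-f_1,\ldots,-f_l,-g_1,\ldots,-g_m,-h_1,\ldots,-h_n)\bigr)
\]
as a direct sum, over all nonnegative tuples $(x_i,y_j,z_k)$ with $\sum x_i+\sum y_j+\sum z_k=p$, of tensor products
\[
\bigotimes_{i=1}^{l} H^{x_i}\bigl(\PP^1,\OO_{\PP^1}(-f_i)\bigr)\otimes \bigotimes_{j=1}^{m} H^{y_j}\bigl(\PP^3,\OO_{\PP^3}(-g_j)\bigr)\otimes \bigotimes_{k=1}^{n} H^{z_k}\bigl(\PP^5,\OO_{\PP^5}(-h_k)\bigr).
\]
By the vanishing of intermediate cohomology on each $\PP^d$ (part (b) of the cohomology theorem), a nonvanishing summand requires $x_i\in\{0,1\}$, $y_j\in\{0,3\}$ and $z_k\in\{0,5\}$. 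Parts (a) and (c) of the same theorem then constrain the signs of the twists: $H^0(\PP^d,\OO(-a))\neq 0$ forces $a\leq 0$, while $H^d(\PP^d,\OO(-a))\neq 0$ forces $a\geq d+1$.

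In the central case where each individual $f_i,g_j,h_k$ is positive, the $H^0$ factor on every projective space vanishes at once (the twist being strictly negative), so each index must take its maximal value. Any surviving K\"{u}nneth summand therefore lies at $p=l+3m+5n=\dim X$, and so the cohomology vanishes for every $p$ in the stated range $0\leq p<\dim X-1$.

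The main obstacle is handling the full generality of the hypothesis, where only the three sums $\sum f_i,\sum g_j,\sum h_k$ are assumed positive. In that situation one must partition the indices into ``top'' $(x_i=1$, $y_j=3$, $z_k=5)$ and ``bottom'' $(x_i=0$, $y_j=0$, $z_k=0)$ sets and run a careful case analysis: for every admissible configuration with total $p<\dim X-1$ one has to exhibit at least one factor forced to vanish, using the sign constraints above together with the three positive-sum hypotheses to rule out the mixed configurations in which part of the dimension is missing from the top.
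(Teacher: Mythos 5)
Your overall strategy (K\"unneth reduction to line-bundle cohomology on each factor, then Theorem 2.8 for $\PP^d$) is the natural one, and in fact the paper states this lemma without any proof at all, so there is nothing more specific to compare against. Your treatment of the case in which every individual $f_i,g_j,h_k$ is positive is complete and correct: each factor can then contribute only in its top degree, so the sole surviving K\"unneth summand sits in degree $l+3m+5n=\dim X$, and vanishing for all $p<\dim X-1$ follows.

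The step you defer, however --- the ``careful case analysis'' needed when only the three sums $\sum f_i,\sum g_j,\sum h_k$ are assumed positive --- is a genuine gap, and it cannot be closed, because the statement is false in that generality. Take $l=1$, $m=2$, $n=1$, so $\dim X=12$, and the twist $(f_1;g_1,g_2;h_1)=(2;5,-1;6)$, which satisfies $\sum f_i=2>0$, $\sum g_j=4>0$, $\sum h_k=6>0$. The K\"unneth summand
\[
H^1(\PP^1,\OO_{\PP^1}(-2))\otimes H^3(\PP^3,\OO_{\PP^3}(-5))\otimes H^0(\PP^3,\OO_{\PP^3}(1))\otimes H^5(\PP^5,\OO_{\PP^5}(-6))
\]
is a nonzero contribution to $H^9$, and $9<\dim X-1=11$. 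In general a factor contributing $H^0$ of a positive twist (which the sum hypotheses cannot exclude) lowers the total degree by $1$, $3$ or $5$ while every tensor factor stays nonzero, so mixed ``top/bottom'' configurations with $p<\dim X-1$ genuinely survive. The conclusion does hold in exactly the two situations the paper actually uses: for $p=0$, where positivity of each sum forces at least one $H^0(\PP^d,\OO(-f_i))$ (resp.\ $-g_j$, $-h_k$) to vanish; and when every entry is individually positive, which is your central case. You should either strengthen the hypothesis to entrywise positivity or restrict the range of $p$; as written, your reduction is right but the asserted vanishing does not follow.
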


\begin{lemma}[\cite{8}, Lemma 10]
Let $A$ and $B$ be vector bundles canonically pulled back from $A'$ on $\PP^n$ and $B'$ on $\PP^m$ then\\
$\displaystyle{H^q(\bigwedge^s(A\otimes B))=
\sum_{k_1+\cdots+k_s=q}\big\{\bigoplus_{i=1}^{s}(\sum_{j=0}^s\sum_{m=0}^{k_i}H^m(\wedge^j(A))\otimes(H^{k_i-m}(\wedge^{s-j}(B)))) \big\}}$.
\end{lemma}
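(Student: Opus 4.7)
The plan is to combine a decomposition of $\bigwedge^s$ of an external tensor product with the K\"unneth formula (Theorem 2.5 above). Because $A$ and $B$ are canonically pulled back from $\PP^n$ and $\PP^m$ respectively, the bundle $A\otimes B$ on $\PP^n\times\PP^m$ coincides with the external tensor product $A'\boxtimes B'$, and this is the structural fact that makes the cohomology split as a sum of K\"unneth-type pieces.

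First I would establish a decomposition of $\bigwedge^s(A\otimes B)$ into bundles whose factors are exterior powers of $A$ and $B$ individually, i.e.\ into summands of the form $\bigwedge^{j}(A)\boxtimes\bigwedge^{s-j}(B)$ (possibly with multiplicities indexed by a partition structure). In characteristic zero this is the Cauchy decomposition, but for this lemma it can also be built up by iterating Proposition 2.4 applied to the short exact sequence obtained by filtering $A\otimes B$ through its subbundles $A'\otimes B''\subset A\otimes B$, so that at each of the $s$ exterior-power steps one gets a two-term filtration whose graded pieces are of the desired product form. The outer sum over $s$-tuples $(k_1,\dots,k_s)$ with $k_1+\cdots+k_s=q$ in the statement reflects precisely the bookkeeping of these $s$ successive decompositions.

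Next, to each summand $\bigwedge^{j}(A)\boxtimes\bigwedge^{s-j}(B)$ I would apply Theorem 2.5 to obtain
\[
H^{k_i}\bigl(\bigwedge^{j}(A)\boxtimes\bigwedge^{s-j}(B)\bigr)=\bigoplus_{m=0}^{k_i}H^{m}\bigl(\bigwedge^{j}(A)\bigr)\otimes H^{k_i-m}\bigl(\bigwedge^{s-j}(B)\bigr),
\]
which accounts for the inner double summation over $j$ and $m$ in the lemma. Assembling these K\"unneth contributions over the $s$ factors produced by the preceding filtration step, and then summing over all distributions $k_1+\cdots+k_s=q$ of the total cohomological degree, yields the formula as stated.

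The main obstacle is the first step: producing the exterior-power decomposition of $\bigwedge^s(A\otimes B)$ in a form compatible with the external tensor product, since the naive identity $\bigwedge^{s}(V\otimes W)\cong\bigoplus_{j}\bigwedge^{j}(V)\otimes\bigwedge^{s-j}(W)$ is false in general (the dimensions $\binom{nm}{s}$ and $\binom{n+m}{s}$ disagree). One must therefore either invoke Schur functors and the Cauchy identity, or set up an explicit filtration and iterate Proposition 2.4 carefully so that each graded piece pulls apart along the two factors of $X=\PP^n\times\PP^m$. Once this decomposition is secured, applying K\"unneth term by term and re-indexing the resulting sums is essentially formal and recovers the displayed expression.
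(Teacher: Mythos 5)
First, a point of comparison that is really a non-comparison: the paper does not prove this statement anywhere. It is imported verbatim as Lemma~10 of \cite{8} and used as a black box, so there is no in-paper argument to measure your sketch against; what follows assesses the sketch on its own terms.

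Your proposal has a genuine gap, and you have in fact identified it yourself without resolving it. The displayed formula is assembled from pieces $H^m(\bigwedge^j A)\otimes H^{k_i-m}(\bigwedge^{s-j}B)$, so it presupposes that $\bigwedge^s(A\otimes B)$ splits into summands $\bigwedge^{j}(A)\boxtimes\bigwedge^{s-j}(B)$. As you correctly note, that is the decomposition of $\bigwedge^s(A\oplus B)$, not of $\bigwedge^s(A\otimes B)$: the ranks $\binom{nm}{s}$ and $\binom{n+m}{s}$ already disagree, and the correct splitting of the tensor product in characteristic zero is the Cauchy identity $\bigwedge^s(A'\boxtimes B')\cong\bigoplus_{\lambda\vdash s}S_\lambda(A')\boxtimes S_{\lambda'}(B')$ into Schur functors. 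But you then treat this as a technicality to be smoothed over, and neither escape route you offer works. Invoking Cauchy produces Schur functors $S_\lambda$, which are exterior powers only for the two extreme partitions $\lambda=(1^s)$ and $\lambda=(s)$; applying K\"unneth to the Cauchy summands therefore yields $\bigoplus_{\lambda}\bigoplus_m H^m(S_\lambda(A'))\otimes H^{q-m}(S_{\lambda'}(B'))$, which is a different formula from the one stated, not a re-indexing of it. The alternative of iterating the exterior/symmetric-power sequence (your Proposition~2.4) also does not get off the ground: that result takes as input a short exact sequence $0\to E\to F\to G\to 0$ and produces terms $\bigwedge^i F\otimes S^{j}G$, and there is no natural short exact sequence with middle term $A\otimes B$ whose sub and quotient bundles are pulled back separately from the two factors; the inclusion you write, ``$A'\otimes B''\subset A\otimes B$,'' is not such a filtration. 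Finally, the outer sum over compositions $k_1+\cdots+k_s=q$ together with the direct sum over $i$ is not the ``bookkeeping'' of any construction you have set up --- nothing in the argument produces $s$ independent cohomological degrees to distribute.

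The one step that is sound is the K\"unneth step: for fixed $j$ one does have $H^{k}\bigl(\bigwedge^j(A)\otimes\bigwedge^{s-j}(B)\bigr)\cong\bigoplus_{m}H^m(\bigwedge^jA')\otimes H^{k-m}(\bigwedge^{s-j}B')$, because $\bigwedge^j(A)\otimes\bigwedge^{s-j}(B)=(\bigwedge^jA')\boxtimes(\bigwedge^{s-j}B')$. If the lemma concerned $A\oplus B$ instead of $A\otimes B$, your strategy (direct-sum decomposition of the exterior power, then K\"unneth) would be complete and elementary; for $A\otimes B$ the first step fails and no rearrangement of the K\"unneth output recovers the stated expression. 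For the way the lemma is actually used in this paper (vanishing of $h^0$ of $\bigwedge^q T$ after a negative twist), the workable substitute is to observe that each $S_\lambda(A')$ is a direct summand of $(A')^{\otimes s}$ and reduce to vanishing for tensor powers, rather than trying to force the exterior powers to separate along the two factors.
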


\begin{lemma}[\cite{4}, Main Theorem] Let $\nu\geq1$. There exists monads on $\PP^{\nu}$ whose maps are matrices of linear forms,
\[
\begin{CD}
0@>>>{\OO_{\PP^{\nu}}(-1)^{\oplus a}} @>>^{A}>{\OO^{\oplus b}_{\PP^{\nu}}} @>>^{B}>{\OO_{\PP^{\nu}}(1)^{\oplus c}} @>>>0\\
\end{CD}
\]
if and only if at least one of the following is fulfilled;\\
$(1)b\geq2c+\nu-1$ , $b\geq a+c$ and \\
$(2)b\geq a+c+\nu$
\end{lemma}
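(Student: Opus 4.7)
The plan is to attack necessity and sufficiency separately, and then to handle the two numerical cases of sufficiency by a combination of explicit construction and generic-rank arguments.

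For necessity, I would first note that exactness at $M_0$ and $M_2$ forces the cohomology bundle $E=\ker(B)/\im(A)$ to have non-negative rank $b-a-c$, which yields $b\geq a+c$. The remaining numerical constraint comes from requiring pointwise surjectivity of $B$ and pointwise injectivity of $A$ on $\PP^{\nu}$. Using the standard degeneracy-locus dimension count, the locus where a generic $c\times b$ matrix of linear forms fails to have full rank has expected codimension $b-c+1$, and symmetrically for $A$ one obtains codimension $b-a+1$. Requiring these loci to be empty on $\PP^{\nu}$ while keeping $BA=0$ imposed as a codimension-$ac$ condition in the parameter space leads, after bookkeeping, to the two alternatives $b\geq 2c+\nu-1$ (tight case) or $b\geq a+c+\nu$ (slack case).

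For sufficiency, I would split into the two cases. In the slack case $b\geq a+c+\nu$, choose a generic $c\times b$ matrix $B$ of linear forms; the degeneracy locus where $\rk B<c$ has expected codimension $b-c+1\geq a+\nu+1>\nu$, so it is empty and $K:=\ker(B)$ is a vector bundle of rank $b-c\geq a+\nu$. Constructing $A:\OO_{\PP^{\nu}}(-1)^{\oplus a}\to K$ with pointwise injectivity is equivalent, after twisting, to exhibiting a generic $\OO_{\PP^{\nu}}^{\oplus a}\to K(1)$ of full rank $a$, and the same codimension argument applied to $K(1)$ (whose rank is at least $a+\nu$) provides it.

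The tight case $b\geq 2c+\nu-1$ with $b\geq a+c$ is more delicate because $K$ can be too small to accommodate a generic $A$. Here I would follow Fløystad's strategy of building the monad from Euler-sequence data: start with the twisted Euler sequence $0\to \OO_{\PP^{\nu}}(-1)\to \OO_{\PP^{\nu}}^{\oplus(\nu+1)}\to T_{\PP^{\nu}}(-1)\to 0$ and use its exterior powers to produce, by direct sums and Koszul-like splicing, an explicit monad meeting the bound. Dualising, i.e.\ passing to $0\to\OO(-1)^{\oplus c}\to\OO^{\oplus b}\to\OO(1)^{\oplus a}\to 0$, converts the roles of $a$ and $c$ and reduces part of the work to the slack case, subject to verifying the compatibility of the two dual numerical conditions.

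The step I expect to be the main obstacle is the simultaneous enforcement, in the tight case with $a$ and $c$ both close to their extremes, of the three open conditions $BA=0$, fibrewise surjectivity of $B$, and fibrewise injectivity of $A$. A pure parameter count is not sufficient there, since the closed subvariety $\{(A,B):BA=0\}$ inside the product of matrix spaces can meet the bad (non-surjective or non-injective) loci along high-dimensional strata, so the explicit Euler-sequence construction is needed to exhibit at least one point in the desired open set before perturbation arguments can be applied.
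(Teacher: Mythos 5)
The first thing to note is that the paper contains no proof of this statement: it is imported verbatim as the Main Theorem of Fl{\o}ystad \cite{4} and used purely as a black box (elsewhere the author only verifies its numerical hypotheses, e.g.\ in Theorem 3.1). So there is nothing in the paper to compare your argument against, and your proposal has to be judged as a reconstruction of Fl{\o}ystad's proof.

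On that footing, the sufficiency half of your sketch points in a reasonable direction --- a generic/Bertini-type argument in the slack regime $b\geq a+c+\nu$ and an explicit construction in the tight regime --- although even there the genericity step needs the bundle you map into (your $K(1)$, or dually $\cok(A)^{*}(1)$) to be globally generated before ``a generic map has constant rank $a$'' is legitimate; that is exactly where the hypothesis $b\geq a+c+\nu$ does its work and it deserves a lemma, not a parenthesis. The genuine gap is in your necessity argument. A degeneracy-locus dimension count describes the behaviour of a \emph{generic} matrix of linear forms, whereas necessity requires showing that \emph{no} pair $(A,B)$ with $BA=0$, $A$ fibrewise injective and $B$ fibrewise surjective exists when both numerical alternatives fail; special matrices can have degeneracy loci of dimension different from the expected one. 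The most your count can honestly deliver (via Fulton--Lazarsfeld nonemptiness applied to the ample twist $\OO_{\PP^{\nu}}(1)$) is $b\geq c+\nu$ and $b\geq a+\nu$, which is strictly weaker than the disjunction of (1) and (2); the claim that ``bookkeeping'' with the codimension-$ac$ condition $BA=0$ in the parameter space produces exactly the alternatives $b\geq 2c+\nu-1$ or $b\geq a+c+\nu$ is an assertion, not an argument, and I do not believe it can be completed by parameter counting alone. Fl{\o}ystad's nonexistence proof is structural: it analyses the kernel bundle $\ker(B)$ and the possible fibrewise-injective maps $\OO_{\PP^{\nu}}(-1)^{\oplus a}\to\ker(B)$ using cohomological and syzygy-theoretic restrictions on subbundles of small rank. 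As it stands, the ``only if'' direction of your proposal is the part that would fail.
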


\begin{lemma}[\cite{11}, Theorem 3.9]
Let $n$ and $k$ be positive integers and $A$ and $B$ be morphisms of linear forms as in 
\[ B :=\left( \begin{array}{cccc|cccccccc}
x_0\cdots  & x_n &       &   &y_0 \cdots  & y_n\\
    &\ddots&\ddots &&\ddots&\ddots\\
    && x_0\cdots   x_n & & & y_0 \cdots  & y_n
\end{array} \right)
\]
 and
\[ A :=\left( \begin{array}{cccccccc}
-y_0\cdots  & -y_n \\
    	   &\ddots &\ddots\\
             &&-y_0 \cdots & -y_n\\
\hline
x_0 \cdots  & x_n \\
    	    &\ddots &\ddots\\
             && x_0\cdots & x_n\\
\end{array} \right)
\]
 then there exists a  linear monad of the form
\[
\begin{CD}
0@>>>\OO_{\PP^{2n+1}}(-1)^{\oplus k} @>>^{{A}}>{\OO^{\oplus2n+2k}_{\PP^{2n+1}}} @>>^{{B}}>\OO_{\PP^{2n+1}}(1)^{\oplus k} @>>>0\\
\end{CD}
\]
\end{lemma}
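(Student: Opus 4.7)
The plan is to verify directly the three defining conditions of a monad for the explicit $A$ and $B$: that the composite $B\circ A$ vanishes, that $A$ is fibrewise injective, and that $B$ is fibrewise surjective. Since the entries are linear in the homogeneous coordinates of $\PP^{2n+1}$, each condition reduces to an explicit matrix-theoretic calculation that exploits the staggered block structure.

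First I would decompose $B=(B_x\mid B_y)$ into its $x$- and $y$-blocks, each of size $k\times(n+k)$, and write $A$ as a column block with upper part $A_{-y}$ (the $(n+k)\times k$ block of $-y$'s) over lower part $A_x$ (the $(n+k)\times k$ block of $x$'s). Then $BA=B_xA_{-y}+B_yA_x$, and the $(i,j)$ entry of each summand is a sum of monomials $\pm x_ay_b$ supported on the overlap of the two staggered index ranges. The diagonal entries cancel immediately by the pointwise identity $\sum_l x_l(-y_l)+\sum_l y_lx_l=0$; the off-diagonal entries cancel pairwise, because each monomial $x_ay_b$ arising in $B_xA_{-y}$ is matched by a copy of opposite sign appearing in $B_yA_x$, the two blocks of $B$ having been staggered compatibly with the two blocks of $A$.

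Next I would show that $A$ is injective on every fibre. At any point $p\in\PP^{2n+1}$ not all of the coordinates $x_0,\dots,x_n,y_0,\dots,y_n$ vanish, so we may assume $x_{i_0}(p)\neq 0$. In each of the $k$ columns of $A(p)$ the entry $x_{i_0}(p)$ appears, by the staggering, at a distinct row of the lower block $A_x(p)$, so these columns are linearly independent; hence $\rk A(p)=k$ and $A$ is a sub-bundle inclusion. The case $y_{j_0}(p)\neq 0$ is symmetric via the upper block $A_{-y}$. Dually, the same staggering argument applied to the rows of $B(p)$ gives $\rk B(p)=k$, so $B$ is a surjection of bundles. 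As a sanity check, the numerical inequalities $b=2n+2k\geq 2c+\nu-1=2k+2n$ and $b\geq a+c=2k$ of Fl\o{}ystad's criterion stated just above are comfortably satisfied.

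The main obstacle will be the bookkeeping in the off-diagonal cancellation $B_xA_{-y}+B_yA_x=0$: one must track the shifts in the $x$- and $y$-blocks of both $A$ and $B$ carefully enough that every monomial $x_ay_b$ appearing in the first product is paired with its companion monomial (with opposite sign) in the second product. Once this is settled, the remaining rank calculations in Steps 2 and 3 follow from elementary pointwise linear algebra.
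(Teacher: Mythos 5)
The paper never proves this lemma; it is quoted from \cite{11}, Theorem 3.9, so there is no internal proof to compare against, and your direct-verification strategy (check $BA=0$, then fibrewise injectivity of $A$ and surjectivity of $B$) is the natural one. However, your argument has a genuine gap at precisely the step you flag as the main obstacle: the off-diagonal cancellation in $B_xA_{-y}+B_yA_x$ fails for the matrices as literally written. Reading the blocks in the only way compatible with the required sizes, $(B_x)_{il}=x_{l-i}$, $(B_y)_{il}=y_{l-i}$, $(A_{-y})_{lj}=-y_{l-j}$, $(A_x)_{lj}=x_{l-j}$, the $(i,j)$ entry of $B_xA_{-y}$ is $-\sum_l x_{l-i}y_{l-j}$, whose monomials $x_uy_v$ all satisfy $u-v=j-i$, while the $(i,j)$ entry of $B_yA_x$ is $\sum_l y_{l-i}x_{l-j}$, whose monomials satisfy $u-v=i-j$. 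For $i\neq j$ these two sets of monomials are disjoint, so no pairwise matching exists and nothing cancels. Concretely, for $n=1$ and $k=2$ the $(1,2)$ entry of $BA$ is $x_0y_1-x_1y_0\neq 0$.

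What is true, and what the underlying instanton construction (Ancona--Ottaviani \cite{1}) actually uses, is that the composite vanishes when the variables in the blocks of $A$ are written in \emph{reversed} order, i.e.\ $(A_{-y})_{lj}=-y_{\,n-(l-j)}$ and $(A_x)_{lj}=x_{\,n-(l-j)}$: then both products contribute monomials $x_uy_v$ with $u+v=n+j-i$, and the involution $l\mapsto n+i+j-l$ pairs the terms off with opposite signs, giving $BA=0$ for all $k$. With that correction your plan goes through. You should also tighten the rank argument: choose $i_0$ \emph{minimal} with $x_{i_0}(p)\neq0$, so that the $k\times k$ minor of $A_x(p)$ on rows $i_0+1,\dots,i_0+k$ is triangular with diagonal entries $x_{i_0}(p)$; merely observing that $x_{i_0}(p)$ occupies a distinct row in each column does not by itself yield linear independence.
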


\vspace{1cm}

\section{Monads on Cartesian products of $\PP^1$,  $\PP^3$ and  $\PP^5$}

\noindent In the this section we construct explicitly the morphisms that establish the existence of the monad in Theorem 3.2 which forms part of the main result of this paper. 
Thereafter, we study the associated vector bundles proving stability of the kernel bundle and simplicity of the cohomology vector bundle.
\begin{construct}

Let $\psi : X = \PP^1\times\cdots\times\PP^1\times\PP^3\times\cdots\times\PP^3\times\PP^5\times\cdots\times\PP^5\hookrightarrow \PP^{N=2\mu+1}$ be the Segre embedding which is defined as follows:\\
\[\psi([\alpha_{10}:\alpha_{11}]:\ldots:[\alpha_{l0}:\alpha_{l1}][\beta_{10}:\beta_{11}:\beta_{12}:\beta_{13}]:\ldots:(\beta_{m0}:\beta_{m1}:\beta_{m2}:\beta_{m3})\]
\[(\gamma_{10}:\gamma_{11}:\gamma_{12}:\gamma_{13}:\gamma_{14}:\gamma_{15}):\ldots:[\gamma_{n0}:\gamma_{n1}:\gamma_{n2}:\gamma_{n3}:\gamma_{n4}:\gamma_{n5}])\in X\]
\[
\begin{CD}
@.@V^{injects}VV\\
@.[x_0:x_1:\cdots:x_\mu:y_0:y_1:\ldots:y_\mu]\in\PP^{2\mu+1}\\
\end{CD}
\]

\noindent First note that since we are taking $l$ copies of $\PP^1$,  $m$ copies of $\PP^3$ and $n$ copies of $\PP^5$ then we have \\
$N=2^l4^m6^n-1$\\
$=2^l2^{2m}2^n3^n-1$\\
$=2^{l+2m+n}3^n-1$\\
$=2^{l+2m+n}3^n-2+1$\\
$=2(2^{l+2m+n-1}3^n-1)+1$\\
$=2\mu+1$\\
\\i.e. $N=2\mu+1$ where $m,n$ and $\mu$ are positive integers such that $\mu=2^{2n+m-1}-1$.\\
\\

From Lemmas 2.11 and Fl\o{}ystad \cite{4} corollary 1 the monad
\[
\begin{CD}
M_\bullet: 0@>>>\OO_{\PP^{2n+1}}(-1)^{\oplus{k}}@>>^{f}>{\OO^{\oplus{2n}\oplus{2k}}_{\PP^{2n+1}}} @>>^{g}>\OO_{\PP^{2n+1}}(1)^{\oplus{k}}  @>>>0\\
\end{CD}
\] 
exists and for a line bundle $\mathscr{L}=\OO_X(1,\cdots,1)$ we have the Segre embedding
\[
\xymatrix{
i^*:X = \PP^1\times\cdots\times\PP^1\times\PP^3\times\cdots\times\PP^3\times\PP^5\times\cdots\times\PP^5 \hookrightarrow\PP\big(H^0(X,\OO_X(1,\ldots,1))\big)\cong \PP^N}
\]
such that $i^*(\OO_X(1))\simeq \mathscr{L}$ and $N=2\mu+1$ supposing that one of the conditions of Lemma 2.13 is satified then the morphisms $A$ and $B$ in Lemma 2.14
induce the desired monad whose morpsims are $f$ and $g$.

In this case the monad 
\[\begin{CD}
0@>>>{\OO_{\PP^{2\mu+1}}(-1)^{\oplus{k}}} @>>^{A}>{\OO^{\oplus2\mu+2k}_{\PP^{2\mu+1}}}@>>^{B}>\OO_{\PP^{2\mu+1}}(1)^{\oplus{k}}@>>>0\end{CD}
\]
whose morphisms $A$ and $B$ that establish the monad are as given in the lemma above\\
\\
We induce a monad on $X:=\PP^1\times\cdots\times\PP^1\times\PP^3\times\cdots\times\PP^3\times\PP^5\times\cdots\times\PP^5$%
 \[\begin{CD}
M_\bullet: 0@>>>\OO_{X}(-1,\cdots,-1)^{\oplus{k}}@>>^{\overline{A}}>{\OO^{\oplus{2\mu}\oplus{2k}}_X} @>>^{\overline{B}}>\OO_{X}(1,\cdots,1)^{\oplus{k}}  @>>>0\\
\end{CD}\]
by giving the morphisms $\overline{A}$ and $\overline{B}$ with $\overline{B}\cdot\overline{A}=0$ and $\overline{A}$ and $\overline{B}$ are of maximal rank.\\
\\%
From $A$ and $B$ whose entries are $x_0,\cdots,x_{\mu},y_0,\cdots,y_{\mu}$ the homogeneous coordinates on $\PP^{2n+1}$ we give the correspondence
for the the Segre embedding using the following table:\\
\[ \begin{array}{|c|c|}
\hline
homog. coord. ~~on ~~\PP^{2n+1} & representation ~homog. coord.~~ on ~~X\\
\hline
x_0 & a_{00\cdots00}b_{00\cdots00}c_{00\cdots00} \\
x_1 & a_{00\cdots00}b_{00\cdots00}c_{00\cdots01} \\
x_2 & a_{00\cdots00}b_{00\cdots00}c_{00\cdots02} \\
x_3 & a_{00\cdots00}b_{00\cdots00}c_{00\cdots03} \\
x_4 & a_{00\cdots00}b_{00\cdots00}c_{00\cdots04} \\
\vdots&\vdots\\
x_{\mu-1} &  a_{01\cdots11}b_{33\cdots33}c_{55\cdots53} \\
x_\mu & a_{01\cdots11}b_{33\cdots33}c_{55\cdots54} \\
y_0 & a_{01\cdots11}b_{33\cdots33}c_{55\cdots55} \\
y_1 & a_{10\cdots00}b_{00\cdots00}c_{00\cdots00} \\
y_2 & a_{10\cdots00}b_{00\cdots00}c_{00\cdots01}  \\
y_3 & a_{10\cdots00}b_{00\cdots00}c_{00\cdots02}  \\
y_4 & a_{10\cdots00}b_{00\cdots00}c_{00\cdots003}  \\
\vdots&\vdots\\
y_{\mu-1} & a_{11\cdots11}b_{33\cdots33}c_{55\cdots54}\\
y_{\mu} & a_{11\cdots11}b_{33\cdots33}c_{55\cdots55}\\
\hline
\end{array} 
\]
\\
where $a_{ii\cdots ii}b_{jj\cdots jj}c_{kk\cdots kk}$ for $i\in\{0,1\}$, $j\in\{0,1,2,3\}$ and $k\in\{0,1,2,3,4,5\}$ are monomials of multidegree $(1,\ldots,1)$ i.e.\\

\[ \begin{array}{|c|c|}
\hline
homog. coord. ~~on ~~\PP^{2n+1} & homog. coord.~~ on ~~X\\
\hline
a_{00\cdots00}b_{00\cdots00}c_{00\cdots00} & \alpha_{10}\alpha_{20}\cdots\alpha_{(l-1)0}\alpha_{l0}\beta_{10}\beta_{20}\cdots\beta_{(m-1)0}\beta_{m0}\gamma_{10}\gamma_{20}\cdots\gamma_{(n-1)0}\gamma_{n0} \\
a_{00\cdots00}b_{00\cdots00}c_{00\cdots01} & \alpha_{10}\alpha_{20}\cdots\alpha_{(l-1)0}\alpha_{l0}\beta_{10}\beta_{20}\cdots\beta_{(m-1)0}\beta_{m0}\gamma_{10}\gamma_{20}\cdots\gamma_{(n-1)0}\gamma_{n1} \\
a_{00\cdots00}b_{00\cdots00}c_{00\cdots02} & \alpha_{10}\alpha_{20}\cdots\alpha_{(l-1)0}\alpha_{l0}\beta_{10}\beta_{20}\cdots\beta_{(m-1)0}\beta_{m0}\gamma_{10}\gamma_{20}\cdots\gamma_{(n-1)0}\gamma_{n2} \\
a_{00\cdots00}b_{00\cdots00}c_{00\cdots03} & \alpha_{10}\alpha_{20}\cdots\alpha_{(l-1)0}\alpha_{l0}\beta_{10}\beta_{20}\cdots\beta_{(m-1)0}\beta_{m0}\gamma_{10}\gamma_{20}\cdots\gamma_{(n-1)0}\gamma_{n3} \\
a_{00\cdots00}b_{00\cdots00}c_{00\cdots04} & \alpha_{10}\alpha_{20}\cdots\alpha_{(l-1)0}\alpha_{l0}\beta_{10}\beta_{20}\cdots\beta_{(m-1)0}\beta_{m0}\gamma_{10}\gamma_{20}\cdots\gamma_{(n-1)0}\gamma_{n4} \\
\vdots&\vdots\\
a_{01\cdots11}b_{33\cdots33}c_{55\cdots53} & \alpha_{10}\alpha_{21}\cdots\alpha_{(l-1)1}\alpha_{l1}\beta_{13}\beta_{23}\cdots\beta_{(m-1)3}\beta_{m3} \gamma_{15}\gamma_{25}\cdots\gamma_{(n-1)5}\gamma_{n3} \\
a_{01\cdots11}b_{33\cdots33}c_{55\cdots54} & \alpha_{10}\alpha_{21}\cdots\alpha_{(l-1)1}\alpha_{l1}\beta_{13}\beta_{23}\cdots\beta_{(m-1)3}\beta_{m3} \gamma_{15}\gamma_{25}\cdots\gamma_{(n-1)5}\gamma_{n4} \\
a_{01\cdots11}b_{33\cdots33}c_{55\cdots55} & \alpha_{10}\alpha_{21}\cdots\alpha_{(l-1)1}\alpha_{l1}\beta_{13}\beta_{23}\cdots\beta_{(m-1)3}\beta_{m3} \gamma_{15}\gamma_{25}\cdots\gamma_{(n-1)5}\gamma_{n5} \\
a_{10\cdots00}b_{00\cdots00}c_{00\cdots000} & \alpha_{11}\alpha_{20}\cdots\alpha_{(l-1)0}\alpha_{l0}\beta_{10}\beta_{20}\cdots\beta_{(m-1)0}\beta_{m0}\gamma_{10}\gamma_{20}\cdots\gamma_{(n-1)0}\gamma_{n0} \\
a_{10\cdots00}b_{00\cdots00}c_{00\cdots001} & \alpha_{11}\alpha_{20}\cdots\alpha_{(l-1)0}\alpha_{l0}\beta_{10}\beta_{20}\cdots\beta_{(m-1)0}\beta_{m0}\gamma_{10}\gamma_{20}\cdots\gamma_{(n-1)0}\gamma_{n1} \\
a_{10\cdots00}b_{00\cdots00}c_{00\cdots002} & \alpha_{11}\alpha_{20}\cdots\alpha_{(l-1)0}\alpha_{l0}\beta_{10}\beta_{20}\cdots\beta_{(m-1)0}\beta_{m0}\gamma_{10}\gamma_{20}\cdots\gamma_{(n-1)0}\gamma_{n2} \\
a_{10\cdots00}b_{00\cdots00}c_{00\cdots003} & \alpha_{11}\alpha_{20}\cdots\alpha_{(l-1)0}\alpha_{l0}\beta_{10}\beta_{20}\cdots\beta_{(m-1)0}\beta_{m0}\gamma_{10}\gamma_{20}\cdots\gamma_{(n-1)0}\gamma_{n3} \\
\vdots&\vdots\\
a_{11\cdots11}b_{33\cdots33}c_{55\cdots54} & \alpha_{11}\alpha_{21}\cdots\alpha_{(l-1)1}\alpha_{l1}\beta_{13}\beta_{23}\cdots\beta_{(m-1)3}\beta_{m3}\gamma_{15}\gamma_{25}\cdots\gamma_{(n-1)5}\gamma_{n4} \\
a_{11\cdots11}b_{33\cdots33}c_{55\cdots55} & \alpha_{11}\alpha_{21}\cdots\alpha_{(l-1)1}\alpha_{l1}\beta_{13}\beta_{23}\cdots\beta_{(m-1)3}\beta_{m3} \gamma_{15}\gamma_{25}\cdots\gamma_{(n-1)5}\gamma_{n5} \\
\hline
\end{array} 
\]
Specifically we define $\overline{A}$ and $\overline{B}$ as follows\\

\[ \overline{B} =\left[\begin{array}{cc}
B_1 & B_2 
         \end{array} \right]\] and

\[ \overline{A} =\left[\begin{array}{cc}
-A_1 \\ A_2      \end{array} \right]\]
where\\

\[ B_1 :=\left[ \begin{array}{cccc}
a_{00\cdots00}b_{00\cdots00}c_{00\cdots00}\cdots & a_{01\cdots11}b_{33\cdots33}c_{55\cdots54} \\
   \ddots&\ddots\\
    & a_{00\cdots00}b_{00\cdots00}c_{00\cdots00}\cdots a_{01\cdots11}b_{33\cdots33}c_{55\cdots54}
\end{array} \right]
\]
\\
\[ B_2 :=\left[ \begin{array}{cccc}
a_{01\cdots11}b_{33\cdots33}c_{55\cdots55}\cdots  & a_{11\cdots11}b_{33\cdots33}c_{55\cdots55} \\
   \ddots&\ddots\\
    & a_{01\cdots11}b_{33\cdots33}c_{55\cdots55}\cdots  a_{11\cdots11}b_{33\cdots33}c_{55\cdots55}
\end{array} \right]
\]

\begin{center} and \end{center}

 \[ A_1:=\left[ \begin{array}{cccc}
a_{01\cdots11}b_{33\cdots33}c_{55\cdots55}\cdots  & a_{11\cdots11}b_{33\cdots33}c_{55\cdots55} \\
   \ddots&\ddots\\
    & a_{01\cdots11}b_{33\cdots33}c_{55\cdots55}\cdots  a_{11\cdots11}b_{33\cdots33}c_{55\cdots55}
\end{array} \right]
\]

\[ A_2 :=\left[ \begin{array}{cccc}
a_{00\cdots00}b_{00\cdots00}c_{00\cdots00}\cdots & a_{01\cdots11}b_{33\cdots33}c_{55\cdots54} \\
   \ddots&\ddots\\
    & a_{00\cdots00}b_{00\cdots00}c_{00\cdots00}\cdots a_{01\cdots11}b_{33\cdots33}c_{55\cdots54}
\end{array} \right]
\]
\\
We note that
\begin{enumerate}
 \item $\overline{B}\cdot \overline{A} = 0$ and
 \\
 \item  The matrices $\overline{B}$ and $\overline{A}$ have maximal rank\\
\end{enumerate}
Hence we get the desired monad,
\[\begin{CD}
M_\bullet: 0@>>>\OO_{X}(-1,\cdots,-1)^{\oplus{k}}@>>^{\overline{A}}>{\OO^{\oplus{2\mu}\oplus{2k}}_X} @>>^{\overline{B}}>\OO_{X}(1,\cdots,1)^{\oplus{k}}  @>>>0\\
\end{CD}\]
\end{construct}

\begin{theorem}
 Let $X = \PP^1\times\cdots\times\PP^1\times\PP^3\times\cdots\times\PP^3\times\PP^5\times\cdots\times\PP^5$,  be a Cartesian product of $l$ copies 
of $\PP^1$, $m$ copies of $\PP^3$ and $n$ copies of $\PP^5$. There exists a monad of the form
 \[
\begin{CD}
M_\bullet: 0@>>>\OO_{X}(-1,\cdots,-1)^{\oplus{k}}@>>^{\overline{A}}>{\OO^{\oplus{2\mu}\oplus{2k}}_X} @>>^{\overline{B}}>\OO_{X}(1,\cdots,1)^{\oplus{k}}  @>>>0\\
\end{CD}
\]
where $l,m,n,k$ are positive integers and $\mu=2^{l+2m+n-1}3^n-1$.
\end{theorem}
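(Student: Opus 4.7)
My approach is to mimic the staircase matrix construction of Lemma 2.10 on each projective space factor and combine the data via the K\"{u}nneth decomposition to build explicit morphisms $f$ and $g$ on $X$. The key numerical observation is that by K\"{u}nneth (Lemma 2.7, Theorem 2.8), $\dim H^0(X,\OO_X(1,\ldots,1)) = 2^l\cdot 4^m\cdot 6^n = 2^{l+2m+n}\cdot 3^n = 2(\mu+1)$, so the middle rank $2\mu+2k$ is exactly $2(\mu+1)+2(k-1)$. This generalises the fact that, in the single-factor case of Lemma 2.10, the middle rank $2\nu+2k$ equals $\dim H^0(\OO_{\PP^{2\nu+1}}(1))+2(k-1)$, and it tells us that a staircase-type construction should fit.

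Concretely, I would fix on each odd-dimensional factor the basis $\{x_0,\ldots,x_\nu,y_0,\ldots,y_\nu\}$ of $H^0(\OO_{\PP^{2\nu+1}}(1))$ as in Lemma 2.10, take tensor products via K\"{u}nneth to obtain a basis of $H^0(X,\OO_X(1,\ldots,1))$, and split this basis into two halves $\{t_1,\ldots,t_{\mu+1}\}$ and $\{t_1',\ldots,t_{\mu+1}'\}$ with the pairing $t_i\leftrightarrow t_i'$ induced by the per-factor pairings $x_i\leftrightarrow y_i$. Then $g$ is defined as the $k\times(2\mu+2k)$ block-staircase matrix whose top half has entries $t_i$ and bottom half has entries $t_i'$, arranged with the same shifts as the matrix $B$ of Lemma 2.10, and $f$ is defined dually as the $(2\mu+2k)\times k$ staircase matrix with entries $-t_i'$ and $t_i$.

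Surjectivity of $g$ and injectivity of $f$ would then follow because the entries of each matrix include a basis of the globally generated bundle $\OO_X(1,\ldots,1)$, so the evaluation map is fibrewise surjective (resp.\ its twisted dual is fibrewise injective), and the staircase shifts preserve these properties block by block. The identity $gf=0$ reduces entry by entry to a telescoping sum of terms $-t_i\cdot t_i'+t_i'\cdot t_i$, which vanishes by commutativity of section multiplication in $H^0(\OO_X(2,\ldots,2))$. I expect the main obstacle to be organising the splitting and sign pattern so that the K\"{u}nneth-induced pairing on $H^0(\OO_X(1,\ldots,1))$ is compatible with the staircase shifts across factors of different dimensions: for a single $\PP^{2\nu+1}$ the pairing is the canonical $x_i\leftrightarrow y_i$, but on the product one must thread $2^{l+2m+n-1}3^n$ pairs of basis sections through the staircase consistently, and verifying that every entry of the product matrix collapses to a pairwise-cancelling sum is where the main bookkeeping lies.
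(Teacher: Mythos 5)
Your construction is essentially the paper's: the paper likewise takes the Fl\o{}ystad/Ancona--Ottaviani staircase matrices and substitutes for the coordinates $x_i,y_i$ of $\PP^{2\mu+1}$ the multidegree-$(1,\ldots,1)$ monomials on $X$ (your K\"unneth basis), packaged as the restriction of the monad on $\PP^{2\mu+1}$ along the Segre embedding, and your observation $h^0(X,\OO_X(1,\ldots,1))=2\mu+2$ is exactly the paper's computation $N=2\mu+1$. The only real difference is cosmetic: the paper pairs the two halves of the monomial basis by lexicographic order (first half versus second half) rather than by the factorwise $x_i\leftrightarrow y_i$ involution, which sidesteps the pairing bookkeeping you flag, since once the matrices are viewed as pulled back from $\PP^{2\mu+1}$ any bijection between the Segre coordinates and the monomial basis yields the same monad up to a linear change of coordinates.
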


\begin{proof} 
From the conditions of Lemma 2.1 (Fl\o{}ystad), we have $a=c=k$, $b=2\mu+2k$ and $\nu=\dim X$,  $X=\PP^1\times\cdots\times\PP^1\times\PP^3\times\cdots\times\PP^3\times\PP^5\times\cdots\times\PP^5$.\\
We show that these conditions hold. \\
Condition Two: $b\geq a+c+\nu$.\\
Now 
\begin{align*}
\begin{split}
b & = 2k+2\mu\\
  & = 2k+ 2[2^{l+2m+n-1}3^n-1]\\
  & = 2k+ 2^{l+2m+n}3^n-2\\
  & = 2k+ 2^l4^m6^n-2\\
  & > 2k+ 2^l+4^m+6^n\\
  & > 2k+ 2^l+3^m+5^n\\
  & > 2k+ l+3m+5n\\
  & = 2k + \dim X\\
  & = a+c+\nu
\end{split}
\end{align*}
Thus $b= 2k+2\mu\geq 2k + \dim X = a+c+\nu$.\\
Condition One follows since $b>a+c+\nu>a+c+\nu-1$ and $b=2k+2\mu\geq 2k = a+c$.\\
The morphisms $\overline{A}$ and $\overline{B}$ are constructed explicitly in the above construction 3.2.

\end{proof}

\begin{lemma}
Let $T$ be a vector bundle on $X = \PP^1\times\cdots\times\PP^1\times\PP^3\times\cdots\times\PP^3\times\PP^5\times\cdots\times\PP^5$ defined by the short exact sequence
\[\begin{CD}0@>>>T @>>>\OO_X^{2\mu+2k}@>>^{g}>\OO_X(1,\cdots,1)^{\oplus k} @>>>0\end{CD}\]
then $T$ is stable for an ample line bundle $\mathscr{L} = \OO_X(1,\cdots,1)$
\end{lemma}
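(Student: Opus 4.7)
The plan is to apply the Generalized Hoppe Criterion (Proposition 2.5) to $T$. From the defining short exact sequence I read off $\rk(T) = 2\mu+k$ and $c_1(T) = -k(1,\ldots,1)$, giving $\mu_\mathscr{L}(T) = -kd/(2\mu+k)$ with $d = \deg_\mathscr{L}\OO_X(1,\ldots,1)$. Because $-k/(2\mu+k) \in (-1,0]$, the normalization index is $k_T = 0$ and $T_{\mathscr{L}-\mathrm{norm}} = T$, so Proposition 2.5 reduces the stability assertion to the vanishing
\[ H^0\bigl(X,\wedge^q T \otimes \OO_X(\vec{f},\vec{g},\vec{h})\bigr) = 0 \]
for all $q$ with $1 \le q \le 2\mu+k-1$ and all $(\vec{f},\vec{g},\vec{h}) \in \ZZ^{l+m+n}$ satisfying $\delta_\mathscr{L}(\vec{f},\vec{g},\vec{h}) \le 0$.

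I would divide the argument into two cases. If some component of $(\vec{f},\vec{g},\vec{h})$ is strictly negative, then by Theorem 2.8(a) and the K\"unneth formula (Theorem 2.7), $H^0(X,\OO_X(\vec{f},\vec{g},\vec{h})) = 0$. Since the defining sequence of $T$ embeds $\wedge^q T$ into the trivial bundle $\wedge^q\OO_X^{2\mu+2k} = \OO_X^{\binom{2\mu+2k}{q}}$, tensoring with the twist and taking global sections gives the desired vanishing immediately. If on the other hand every component of $(\vec{f},\vec{g},\vec{h})$ is non-negative, then since every factor in $\mathscr{L} = \OO_X(1,\ldots,1)$ carries strictly positive weight, the condition $\delta_\mathscr{L}(\vec{f},\vec{g},\vec{h})\le 0$ forces every entry to vanish; the problem then reduces to showing $H^0(X,\wedge^q T) = 0$.

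For this residual subcase I would apply Proposition 2.6 to the defining sequence of $T$, obtaining the Koszul-type resolution
\[ 0 \to \wedge^q T \to \wedge^q V \to \wedge^{q-1}V \otimes W \to \cdots \to V \otimes S^{q-1} W \to S^q W \to 0, \]
with $V = \OO_X^{2\mu+2k}$ and $W = \OO_X(1,\ldots,1)^{\oplus k}$. Splicing into short exact sequences and using left-exactness of $H^0$ identifies $H^0(X,\wedge^q T)$ with the kernel of the first Koszul differential $\wedge^q\CC^{2\mu+2k} \to \wedge^{q-1}\CC^{2\mu+2k} \otimes H^0(X,W)$ induced by the linear map $g_\ast:\CC^{2\mu+2k}\to H^0(X,W)$. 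A direct expansion then shows that injectivity of $g_\ast$ propagates to injectivity of the Koszul differential at every level $q$, because the coefficient of each $(q-1)$-fold basis wedge in the image is a linear combination of the vectors $g_\ast(e_j)$ whose independence forces every original coefficient to vanish.

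The main obstacle is verifying that $g_\ast$ is itself injective, which amounts to showing that the $2\mu+2k$ columns of $\overline{B}$, viewed as $k$-tuples of monomials of multidegree $(1,\ldots,1)$, are linearly independent inside $H^0(X,W)$. The block-shifted structure of $\overline{B}$ inherited from Lemma 2.14 together with the Segre correspondence table in the construction preceding the lemma is what makes this possible, but the combinatorial verification of linear independence, distributed across every row and every alternating block, is where care is required.
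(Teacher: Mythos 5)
Your proposal is correct and rests on the same skeleton as the paper's proof (the generalized Hoppe criterion, followed by embedding a twist of $\bigwedge^q T$ into an exterior power of the trivial middle term and invoking K\"unneth together with the cohomology of line bundles on the factors), but your partition of the twists is genuinely different and covers strictly more ground. The paper verifies the vanishing only for twists $(-f_1,\dots,-h_n)$ with $\sum f_i>0$, $\sum g_j>0$ and $\sum h_k>0$ simultaneously, and never addresses the boundary twist $B=0$, even though the criterion it invokes demands $H^0(\bigwedge^q T)=0$ as well (indeed the original Hoppe statement requires vanishing for all $B$ with $\delta_{\mathscr L}(B)<-q\mu_{\mathscr L}(T)$, a positive bound here since $\mu_{\mathscr L}(T)<0$). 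Your two cases are exhaustive: the first is the paper's mechanism run under the weaker hypothesis that a single component be negative, and the second supplies exactly the missing ingredient. For that residual case your outline is sound and can be tightened: the kernel of the first Koszul differential $\bigwedge^q V\to\bigwedge^{q-1}V\otimes H^0(W)$ induced by a linear map $g_*$ is precisely $\bigwedge^q(\ker g_*)$, so injectivity of $g_*$ immediately kills $H^0(\bigwedge^q T)$ for every $q\ge 1$ without any coefficient expansion. The linear independence of the columns of $\overline B$ that you flag as the remaining obstacle is routine from the band structure: reading the relation $\sum_j c_j(\mathrm{col}_j)=0$ one row at a time, each row is a combination of pairwise distinct Segre monomials of multidegree $(1,\dots,1)$, forcing the relevant $c_j$ to vanish successively; this is also the unproved ``maximal rank'' assertion in the paper's Construction, so nothing circular is being used. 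The one place where you mirror the paper's imprecision rather than improve on it is the normalization step, where both arguments pass from $\deg_{\mathscr L}T<0$ directly to $(\bigwedge^q T)_{\mathscr L-\mathrm{norm}}=\bigwedge^q T$ without checking the degree window in the definition of $k_E$; this does not affect the substance of either proof.
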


\begin{proof}
We show that $H^0(X,(\bigwedge^q T)_{{\mathscr{L}}-norm}(f_1,\cdots,f_{l},g_1,\cdots,g_m,h_1,\cdots,h_m)) = 0$ for all
$\displaystyle{\sum_i^l f_i>0}$, $\displaystyle{\sum_j^m g_i>0}$, $\displaystyle{\sum_k^n h_k>0}$ and $1\leq q\leq \rk(T)$.\\
\\
Consider the ample line bundle $\mathscr{L} = \OO_X(1,\cdots,1) = \OO(L)$. Its class in $\pic(X)$ where\\
$\pic(X)= \langle [f_1\times\PP^1],\cdots,[\PP^1\times f_l], [g_1\times\PP^3],\cdots,[\PP^3\times g_m], [h_1\times\PP^5],\cdots,[\PP^5\times h_m]\rangle$
corresponds to the class $\sum_{i=1}^l1\cdot[f_i\times\PP^1]+\sum_{j=1}^m1\cdot[g_j\times\PP^3]+\sum_{k=1}^m1\cdot[h_k\times\PP^5]$\\
where $f_i$, $i=1,\cdots,l$ are hyperplanes of $\PP^1$ with the intersection product induced by $f_i^{1} = 1$ and $f_i^{2}=0$.\\
$g_j$, $j=1,\cdots,m$ are hyperplanes of $\PP^3$ with the intersection product induced by $g_j^{3} = 1$ and $g_j^{4}=0$.\\
$h_k$, $k=1,\cdots,n$ are hyperplanes of $\PP^5$ with the intersection product induced by $h_k^{5} = 1$ and $h_k^{6}=0$.\\
\\
From the short exact sequence \[\begin{CD}0@>>>T @>>>\OO_X^{2\mu+2k}@>>^{g}>\OO_X(1,\cdots,1)^{\oplus k} @>>>0\end{CD}\] we get
\[c_1(T) = (-k,\cdots,-k)\]
Since $L^{l+3m+5n}>0$, then degree of $T$ is given by
$\deg_{\mathscr{L}}T$\\$= -k(\sum_{i=1}^l[f_i\times\PP^1]+\sum_{j=1}^m[g_j\times\PP^3]+\sum_{k=1}^m[h_k\times\PP^5])\cdot\\ 
(\sum_{i=1}^l1\cdot[f_i\times\PP^1]+\sum_{j=1}^m1\cdot[g_j\times\PP^3]+\sum_{k=1}^m1\cdot[h_k\times\PP^5])^{l+3m+5n-1} = -\gamma L^{l+3m+5n}< 0$.\\
Since $\deg_{\mathscr{L}}T<0$, then $(\bigwedge^q T)_{\mathscr{L}-norm} = (\bigwedge^q T)$ and  it suffices by  Proposition 2.4, 
to prove that $h^0(\bigwedge^q T(-f_1,\cdots,-f_l,-g_1,\cdots,-g_m,-h_1,\cdots,-h_n)) = 0$ with $\displaystyle{\sum_{i=1}^lf_i>0}$, 
$\displaystyle{\sum_{j=1}^mg_j>0}$, and $\displaystyle{\sum_{k=1}^nh_k>0}$ for all $1\leq q\leq \rk(T)-1$.\\
\\
Next we twist the exact sequence 
\[\begin{CD}
0@>>>T @>>>{\OO_X^{2\mu+2k}} @>>>\OO_X(1,\cdots,1)^{\oplus k} @>>>0
\end{CD}\]
by $\OO_X (-f_1,\cdots,-f_l,-g_1,\cdots,-g_m,-h_1,\cdots,-h_n)$ we get the sequence,\\
\\
$0\lra T(-f_1,\cdots,-f_l,-g_1,\cdots,-g_m,-h_1,\cdots,-h_n)\lra\\\lra\OO_X(-f_1,\cdots,-f_l,-g_1,\cdots,-g_m,-h_1,\cdots,-h_n)^{\oplus{2\mu+2k}}\lra\\\lra\OO_X(1-f_1,\cdots,1-f_l,1-g_1,\cdots,1-g_m,1-h_1,\cdots,1-h_n)^{\oplus k}\lra0$\\
\\
and taking the exterior powers of the sequence by Proposition 2.5 we obtain\\
\\
$0\lra \bigwedge^q T(-f_1,\cdots,-f_l,-g_1,\cdots,-g_m,-h_1,\cdots,-h_n) \lra\\\lra \bigwedge^q (\OO_X(-f_1,\cdots,-f_l,-g_1,\cdots,-g_m,-h_1,\cdots,-h_n)^{\oplus{2\mu+2k}})\lra\\\lra \bigwedge^{q-1}(\OO_X(1-2f_1,\cdots,1-2f_l,1-2g_1,\cdots,1-2g_m,1-2h_1,\cdots,1-2h_n)^{\oplus k})\cdots$\\
\\
Taking cohomology we have the injection:\\
$0\lra H^0(\bigwedge^{q}T(-f_1,\cdots,-f_l,-g_1,\cdots,-g_m,-h_1,\cdots,-h_n))\lra\\\hookrightarrow H^0(\bigwedge^{q}(\OO_X(-f_1,\cdots,-f_l,-g_1,\cdots,-g_m,-h_1,\cdots,-h_n))^{\oplus k}$\\
\\%
since $\displaystyle{\sum_i^l f_i>0}$, $\displaystyle{\sum_i^m g_j>0}$ and  $\displaystyle{\sum_i^n h_k>0}$ using Lemma 2.9, Lemma 2.10 and Theorem 2.8
then \[h^0(X,\bigwedge^{q}(\OO_X(-f_1,\cdots,-f_l,-g_1,\cdots,-g_m,-h_1,\cdots,-h_n)^{\oplus k}))= 0\]
thus it follows $h^0(\bigwedge^{q}T(-f_1,\cdots,-f_l,-g_1,\cdots,-g_m,-h_1,\cdots,-h_n))=0$ and hence $T$ is stable.

\end{proof}

\begin{theorem} Let $X = \PP^1\times\cdots\times\PP^1\times\PP^3\times\cdots\times\PP^3\times\PP^5\times\cdots\times\PP^5$, then the cohomology vector bundle $E$ associated to the monad 
\[
\begin{CD}
0@>>>{\OO_X(-1,\cdots,-1)^{\oplus k}} @>>^{f}>{\OO_X^{2\mu+2k}}@>>^{g}>\OO_X(1,\cdots,1)^{\oplus k} @>>>0
\end{CD}
\]
of rank $2\mu$ is simple.
\end{theorem}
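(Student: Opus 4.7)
The plan is to prove $\Hom(E,E) = \CC$. Since the identity endomorphism provides a nonzero element of $\Hom(E,E)$, it will suffice to establish the upper bound $\dim_{\CC}\Hom(E,E) \leq 1$. My approach exploits only the top row and the left column of the display of $M_\bullet$, namely the short exact sequences
\[
(1)\colon\ 0 \to M_0 \to K \to E \to 0, \qquad (2)\colon\ 0 \to K \to M_1 \to M_2 \to 0,
\]
where $K = \ker g$, $M_0 = \OO_X(-1,\ldots,-1)^{\oplus k}$, $M_1 = \OO_X^{\oplus 2\mu+2k}$, and $M_2 = \OO_X(1,\ldots,1)^{\oplus k}$. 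The key idea is first to identify $\Hom(K,E)$ using that $K$ is simple, and then to sandwich $\Hom(E,E)$ inside $\Hom(K,E)$.

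For the first identification, I would apply the functor $\Hom(K,-)$ to sequence (1) to obtain the long exact sequence
\[
\Hom(K,M_0) \to \Hom(K,K) \to \Hom(K,E) \to \operatorname{Ext}^1(K,M_0).
\]
By the preceding lemma $K$ is $\mathscr{L}$-stable, hence simple, so $\Hom(K,K) = \CC$. It then remains to show that the two flanking terms vanish, which amounts to $H^0(K^\vee(-1,\ldots,-1)) = H^1(K^\vee(-1,\ldots,-1)) = 0$. For this I would dualize (2) and twist by $\OO_X(-1,\ldots,-1)$, obtaining
\[
0 \to \OO_X(-2,\ldots,-2)^{\oplus k} \to \OO_X(-1,\ldots,-1)^{\oplus 2\mu+2k} \to K^\vee(-1,\ldots,-1) \to 0,
\]
and then argue that every cohomology group of both line bundles on the left vanishes on $X$. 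The vanishing of $H^*(\OO_X(-1,\ldots,-1))$ is immediate from Künneth (Lemma 2.9) together with the acyclicity of $\OO_{\PP^a}(-1)$ on each factor $\PP^a$. The vanishing of $H^*(\OO_X(-2,\ldots,-2))$ uses that the hypothesis $m \geq 1$ forces the presence of at least one $\PP^3$ factor, on which $\OO(-2)$ is acyclic, so every Künneth summand collapses. Together these give $H^*(K^\vee(-1,\ldots,-1)) = 0$, and the long exact sequence collapses to $\Hom(K,E) \cong \Hom(K,K) = \CC$.

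The final step is to apply $\Hom(-,E)$ to sequence (1), which begins with the injection
\[
0 \to \Hom(E,E) \hookrightarrow \Hom(K,E).
\]
Combined with the identification $\Hom(K,E) = \CC$ above, this forces $\dim_{\CC}\Hom(E,E) \leq 1$, and since the identity endomorphism is a nonzero element we conclude $\Hom(E,E) = \CC$, that is, $E$ is simple.

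The only substantive obstacle is the cohomology vanishing for the line bundles $\OO_X(-1,\ldots,-1)$ and $\OO_X(-2,\ldots,-2)$ in the middle step; the remainder is a formal chase through the display. Fortunately, the presence of a $\PP^3$ (or $\PP^5$) factor makes the Künneth bookkeeping for $\OO_X(-2,\ldots,-2)$ very clean, and all of the stability input needed has already been supplied by the preceding lemma on $K$.
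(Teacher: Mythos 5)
Your proposal is correct and follows essentially the same route as the paper's own proof: both sandwich $\hom(E,E)$ via the injection $\Hom(E,E)\hookrightarrow\Hom(K,E)$ from the top row of the display, identify $\Hom(K,E)\cong\Hom(K,K)=\CC$ using the vanishing of $H^0$ and $H^1$ of $K^\vee(-1,\ldots,-1)$ obtained from the dualized and twisted left column, and invoke the simplicity of the stable kernel bundle. Your version is merely phrased in $\Hom$/$\operatorname{Ext}$ language (and is, if anything, a little more carefully justified on the K\"unneth vanishing for $\OO_X(-2,\ldots,-2)$) where the paper tensors with duals and compares $h^0$'s.
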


\begin{proof}
The display of the monad is
\[
\begin{CD}
@.@.0@.0\\
@.@.@VVV@VVV\\
0@>>>{\OO_{X}(-1,\cdots,-1)^{\oplus k}} @>>>T=\ker(\beta)@>>>E=\ker(\beta)/\im(\alpha)@>>>0\\
@.||@.@VVV@VVV\\
0@>>>{\OO_{X}(-1,\cdots,-1)^{\oplus k}} @>>^{\alpha}>{\OO^{\oplus2\mu+2k}_{X}}@>>>Q=\cok(\alpha)@>>>0\\
@.@.@V^{\beta}VV@VVV\\
@.@.{\OO_{X}(1,\cdots,1)^{\oplus k} }@={\OO_{X}(1,\cdots,1)^{\oplus k} }\\
@.@.@VVV@VVV\\
@.@.0@.0
\end{CD}
\]

\noindent To show that the cohomology vector bundle $E$ with rank $2\mu$ is simple, we shall rely on the stability of the bundle $T$ 
and analysis of the short exact sequences making up the display of the monad.\\
\\
Take the dual of the short exact sequence that appears as the first row of the display diagram of the monad and tensor it by  $E$ to obtain
\[
\begin{CD}
0@>>>E\otimes E^* @>>>E\otimes T^* @>>>E(1,\cdots,1)^k@>>>0\end{CD}
\]
and on taking cohomology it follows
\begin{equation}
h^0(X,E\otimes E^*) \leq h^0(X,E\otimes T^*)
\end{equation}
\\
We now dualize the exact sequence the forms the first column of the display diagram of the monad to obtain
\[\begin{CD}
0@>>>\OO_X(-1,\cdots,-1)^{\oplus k} @>>>{\OO_X^{\oplus2\mu+2k}} @>>>T^* @>>>0
\end{CD}\]
which on twisting by $\OO_X(-1,\cdots,-1)$, taking cohomology and applying Lemma 2.7 and Theorem 2.8 we deduce 
$H^0(X,T^*(-1,\cdots,-1)) = H^1(X,T^*(-1,\cdots,-1)) = 0$.\\
\\
Finally on tensoring the sequence on the first row of the display, taking cohomology and since $H^1(X,T^*(-1,\cdots,-1)^k=0$ for $k>1$ from the above lemma 
and from (1) above so we have 
\[1\leq h^0(X,T\otimes T^*)\leq h^0(X,E\otimes E^*) \leq h^0(X,E\otimes T^*)\leq1\]
It thus follows $ h^0(X,E\otimes E^*) = 1 $ and thus $E$ is simple.

\end{proof}

\section{Monads on Cartesian products of even and odd projective spaces}

\begin{theorem}
Let $X = \PP^{a_1}\times\cdots\times\PP^{a_n}$ where $a_1\leq a_2\leq\cdots\leq a_n$ alternating even and odd or at least $a_i$ is odd for $0<i\leq n$ and $\mathscr{L}=\OO_X(1,\cdots,1)$ an ample line bundle.
Denote by $N = h^0(\OO_X(1,\cdots,1)) - 1$. Then there exists a linear monad $M_\bullet$ on $X$ of the form
\[
\begin{CD}
M_\bullet: 0@>>>\OO_{X}(-1,\cdots,-1)^{\oplus\alpha}@>>^{\overline{A}}>{\OO^{\oplus\beta}_X} @>>^{\overline{B}}>\OO_{X}(1,\cdots,1)^{\oplus\gamma}  @>>>0
\end{CD}
\]
if atleast one of the following is satified
\begin{enumerate}
\renewcommand{\theenumi}{\alph{enumi}}
 \item $\beta\geq 2\gamma + N -1$, and $\beta\geq \alpha + \gamma$,
 \item $\beta\geq \alpha + \gamma + N$, where $\alpha,\beta, \gamma$ be positive integers. 
\end{enumerate}
\end{theorem}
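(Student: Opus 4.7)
The plan is to imitate the strategy used for Theorem 3.1 and Construction 3.2, now carried out for the more general product $X=\PP^{a_1}\times\cdots\times\PP^{a_n}$. The parity hypothesis on the $a_i$'s ensures that $N+1=\prod_{i=1}^{n}(a_i+1)$ is even, so $N$ is odd and one may invoke the odd-dimensional construction of Lemma 2.14 on $\PP^{N}$.

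The first step is to verify that, with the stated numerical conditions on $(\alpha,\beta,\gamma)$ and with $\nu=N$, the hypotheses of Flo\o ystad's Lemma 2.13 are satisfied on $\PP^{N}$. This is immediate from the hypotheses (a) and (b) of the theorem, which are literally conditions (1) and (2) of Lemma 2.13 with the substitutions $a=\alpha$, $b=\beta$, $c=\gamma$, $\nu=N$. Hence there exists a linear monad
\[
\begin{CD}
0 @>>> \OO_{\PP^{N}}(-1)^{\oplus\alpha} @>A>> \OO_{\PP^{N}}^{\oplus\beta} @>B>> \OO_{\PP^{N}}(1)^{\oplus\gamma} @>>> 0
\end{CD}
\]
whose morphisms $A$ and $B$ are matrices of linear forms in the homogeneous coordinates $z_0,\ldots,z_{N}$ of $\PP^{N}$.

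The second step is to transfer this monad to $X$ via the Segre embedding. Since $\mathscr{L}=\OO_X(1,\ldots,1)$ is very ample and $h^0(\mathscr{L})=N+1$, the complete linear system $|\mathscr{L}|$ gives a closed embedding $\psi\colon X\hookrightarrow \PP^{N}$ with $\psi^*\OO_{\PP^{N}}(1)\cong\mathscr{L}$. Concretely, each $z_i$ pulls back to a monomial of multidegree $(1,\ldots,1)$ in the coordinates of the factors, exactly as in the correspondence tabulated in Construction 3.2 (merely enlarged to accommodate the factors $\PP^{a_j}$). Substituting these monomials for $z_0,\ldots,z_{N}$ in the entries of $A$ and $B$ produces matrices $\overline{A}$ and $\overline{B}$ with entries in $H^0(X,\mathscr{L})$, yielding a complex
\[
\begin{CD}
0 @>>> \OO_X(-1,\ldots,-1)^{\oplus\alpha} @>\overline{A}>> \OO_X^{\oplus\beta} @>\overline{B}>> \OO_X(1,\ldots,1)^{\oplus\gamma} @>>> 0.
\end{CD}
\]
The relation $\overline{B}\cdot\overline{A}=0$ is automatic from $B\cdot A=0$ on $\PP^{N}$.

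The main obstacle, as in Construction 3.2, is to verify that $\overline{A}$ is injective and $\overline{B}$ is surjective as morphisms of sheaves on $X$, since pullback by a closed immersion is only right exact in general. I would argue this fibrewise: at every point $x\in X$ with image $\psi(x)\in\PP^{N}$, the fibres of $\overline{A}$ and $\overline{B}$ coincide with those of $A$ and $B$ at $\psi(x)$, which have maximal rank by Lemma 2.14 (when the construction of Maingi is used in its generalised block form) or Lemma 2.13 together with the explicit block structure in Construction 3.2. Since maximal rank on all fibres over the closed subvariety $\psi(X)$ guarantees maximal rank of the pulled-back morphisms of locally free sheaves, $\overline{A}$ remains injective and $\overline{B}$ remains surjective, and the display is a bona fide monad on $X$. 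The even/odd hypothesis enters precisely here: it guarantees $N$ odd, so that the explicit block matrices of Lemma 2.14 (or their generalisation obtained by the same splitting of coordinates into two halves $z_0,\ldots,z_{(N-1)/2}$ and $z_{(N+1)/2},\ldots,z_{N}$) can be written down on $\PP^{N}$ in the first place, giving us a concrete monad to pull back.
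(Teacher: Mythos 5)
Your proposal is correct and follows essentially the same route as the paper: use the parity hypothesis to show $N=\prod_i(a_i+1)-1$ is odd, invoke Fl\o{}ystad's criterion (Lemma 2.13) with $a=\alpha$, $b=\beta$, $c=\gamma$, $\nu=N$ to get a linear monad on $\PP^N$, and pull it back along the Segre embedding by substituting multidegree-$(1,\ldots,1)$ monomials for the coordinates. In fact you are slightly more careful than the paper, which simply asserts that $A$ and $B$ ``induce the expected monad on $X$''; your fibrewise maximal-rank argument over the closed subvariety $\psi(X)\subset\PP^N$ is exactly the justification that step needs.
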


\begin{proof}
For the ample line bundle $\mathscr{L} = \OO_X(1,\ldots,1)$ we have the Segre embedding
\[
\xymatrix{
i^*:X = \PP^{a_1}\times\cdots\times\PP^{a_n} \hookrightarrow\PP^{N}
}
\]
such that $i^*(\OO_X(1))\simeq \mathscr{L}$ 
\\
Since $a_1<\cdots<a_n$ alternating even and odd we can suppose $a_i$ is odd then $a_i+1$ is even i.e. $a_i+1=2\theta$, $\theta$ a positive integer.\\
Then we have $\displaystyle{N=\prod_{i=1}^n(a_i+1)-1}$
\begin{align*}
\begin{split}
N & = (a_1+1)\cdots(a_i+1)\cdots(a_n+1)-1\\
  & = (a_1+1)\cdots2\theta\cdots(a_n+1)-1\\
  & = 2(a_1+1)\cdots\theta\cdots(a_n+1)-1\\
  & = 2[(a_1+1)\cdots\theta\cdots(a_n+1)]-1\\
  \end{split}
\end{align*}
Thus $N$ is odd that is $N=2n+1$ for $n$ a positive integer.\\
Now suppose that one of the conditions of Lemma 2.11 is satified and we have 
$a=\alpha$, $b=\beta$, $c=\gamma$ and $\nu=2n+1$ thus there exists a linear monad
\[
\begin{CD}
0@>>>{\OO_{\PP^{2n+1}}(-1)^{\oplus\alpha}} @>>^{A}>{\OO^{\oplus\beta}_{\PP^{2n+1}}} @>>^{B}>{\OO_{\PP^{2n+1}}(1)^{\oplus\gamma}} @>>>0\\
\end{CD}
\]
on $\PP^{2n+1}$ whose morphisms are matrices $A$ and $B$ with entries monomials of degree one where
\begin{align*}
A\in\Hom(\OO_{\PP^{2n+1}}(-1)^{\oplus\alpha},\OO_{\PP^{2n+1}}^{\oplus\beta})\cong H^0(\PP^{2n+1},\OO_{\PP^{2n+1}}(1)^{\oplus\alpha\beta}) \\
B\in\Hom(\OO_{\PP^{2n+1}}^{\oplus\beta},\OO_{\PP^{2n+1}}(1)^{\oplus\gamma})\cong H^0(\PP^{2n+1},\OO_{\PP^{2n+1}}(1)^{\oplus\beta\gamma})
\end{align*}
Thus, $A$ and $B$ of the form
\[ B :=\left( \begin{array}{cccc|cccccccc}
x_0\cdots  & x_{n} &       &   &y_0 \cdots  & y_{n}\\
    &\ddots&\ddots &&\ddots&\ddots\\
    && x_0\cdots   x_{n} & & & y_0 \cdots  & y_{n}
\end{array} \right)
\]
 and
\[ A :=\left( \begin{array}{cccccccc}
-y_0\cdots  & -y_{n} \\
    	   &\ddots &\ddots\\
             &&-y_0 \cdots & -y_{n}\\
\hline
x_0 \cdots  & x_{n} \\
    	    &\ddots &\ddots\\
             && x_0\cdots & x_{n}\\
\end{array} \right)
\]

induce the expected monad on $X$, 
\[
\begin{CD}
M_\bullet: 0@>>>\OO_{X}(-1,\cdots,-1)^{\oplus\alpha}@>>^{\overline{A}}>{\OO^{\oplus\beta}_X} @>>^{\overline{B}}>\OO_{X}(1,\cdots,1)^{\oplus\gamma}  @>>>0
\end{CD}
\]

\end{proof}

\vspace{0.5cm}
\begin{construct}
Let $\psi : X = \PP^1\times\PP^2\times\PP^3\longrightarrow \PP^{23}$ be the segre embedding  defined by\\
\\
$[a_0:a_1][b_0:b_1:b_2][c_0:c_1:c_2:c_3]\hookrightarrow [x_0:\ldots:x_{11}:y_0:\dots:y_{11}]$.\\
\\
Then, by Fl\o{}ystad\cite{4}, there exists a linear monad
\[
\begin{CD}
0@>>>\OO_{\PP^{23}}(-1)^{\alpha} @>>^{A}>{\OO^{\beta}_{\PP^{23}}} @>>^{B}>\OO_{\PP^{23}}(1)^{\gamma}  @>>>0\\
\end{CD}
\]

\[ B :=\left( \begin{array}{cccc|cccccccc}
x_0\cdots  & x_{11} &       &   &y_0 \cdots  & y_{11}\\
    &\ddots&\ddots &&\ddots&\ddots\\
    && x_0\cdots   x_{11} & & & y_0 \cdots  & y_{11}
\end{array} \right)
\]
 and
\[ A :=\left( \begin{array}{cccccccc}
-y_0\cdots  & -y_{11} \\
    	   &\ddots &\ddots\\
             &&-y_0 \cdots & -y_{11}\\
\hline
x_0 \cdots  & x_{11} \\
    	    &\ddots &\ddots\\
             && x_0\cdots & x_{11}\\
\end{array} \right)
\]

Now we induce the monad
\[
\begin{CD}
0@>>>\OO_{X}(-1,-1,-1)^{\alpha} @>>^{\overline{A}}>{\OO^{\beta}_{X}} @>>^{\overline{B}}>\OO_{X}(1,1,1)^{\gamma}  @>>>0\\
\end{CD}
\]
We construct $\overline{A}$ and  $\overline{B}$ from $A$ and $B$ from the the segre map using the table:\\
\[ \begin{array}{|c|c|}
\hline
A,B $entries$ & \overline{A}, \overline{B}, entries\\
\hline
x_0 & a_0b_0c_0 \\
x_1 & a_0b_0c_1 \\
x_2 & a_0b_0c_2 \\
x_3 & a_0b_0c_3 \\
x_4 & a_0b_1c_0 \\
x_5 & a_0b_1c_1 \\
x_6 &a_0b_1c_2 \\
x_7 & a_0b_1c_3 \\
x_8 & a_0b_2c_0 \\
x_9 & a_0b_2c_1 \\
x_{10} & a_0b_2c_2 \\
x_{11} & a_0b_2c_3 \\
y_0 & a_1b_0c_0 \\
y_1 & a_1b_0c_1 \\
y_2 & a_1b_0c_2 \\
y_3 & a_1b_0c_3 \\
y_4 & a_1b_1c_0 \\
y_5 & a_1b_1c_1 \\
y_6 & a_1b_1c_2 \\
y_7 & a_1b_1c_3 \\
y_8 & a_1b_2c_0 \\
y_9 & a_1b_2c_1 \\
y_{10} & a_1b_2c_2 \\
y_{11} & a_1b_2c_3 \\
\hline
\end{array} 
\]
\\
Specifically we define two matrices $\overline{A}$ and $\overline{B}$ as follows\\
\[ \overline{B} =\left( \begin{array}{c|c}
B_1 & B_2
         \end{array} \right)\] and

\[ \overline{A}=\left( \begin{array}{cc}
A_1 \\ A_2
         \end{array} \right)\]
Where 

\[ B :=\left( \begin{array}{cccc|cccccccc}
a_0b_0c_0\cdots  & a_0b_2c_3 &       &   &a_1b_0c_0 \cdots  & a_1b_2c_3\\
    &\ddots&\ddots &&\ddots&\ddots\\
    && a_0b_0c_0\cdots   a_0b_2c_3 & & & a_1b_0c_0 \cdots  & a_1b_2c_3
\end{array} \right)
\]
 and
\[ A :=\left( \begin{array}{cccccccc}
-a_1b_0c_0\cdots  & -a_1b_2c_3 \\
    	   &\ddots &\ddots\\
             &&-a_1b_0c_0 \cdots & -a_1b_2c_3\\
\hline
a_0b_0c_0\cdots  & a_0b_2c_3 \\
    	    &\ddots &\ddots\\
             &&a_0b_0c_0\cdots & a_0b_2c_3\\
\end{array} \right)
\]

We note that
\begin{enumerate}
 \item $\overline{B}\cdot \overline{A} = 0$ and
 \\
 \item  The matrices $\overline{B}$ and $\overline{A}$ have maximal rank\\
\end{enumerate}
Hence we get the desired monad,\\
\[
\begin{CD}
0@>>>\OO_{X}(-1,-1,-1)^{\alpha} @>>^{\overline{A}}>{\OO^{\beta}_{X}} @>>^{\overline{B}}>\OO_{X}(1,1,1)^{\gamma}  @>>>0\\
\end{CD}
\]
\end{construct}

\section{Acknowledgment}
\noindent I must acknowledge where journey of constructing vector bundles started, way back in 2012 in Barcelona thanks to the unfailing 
support of Laura Costa of the Universitat de Barcelona, thanks a million! I am extremely grateful to Dr. Melissa Muindi, my dear wife and to Kavete, Maingi and Wachuka our wonderful kids who are always supportive of my pursuits.

\vspace{1cm}

\noindent \textbf{Data Availability statement}
My manuscript has no associate data.

\vspace{1cm}

\noindent \textbf{Conflict of interest}
On behalf of all authors, the corresponding author states that there is no conflict of interest.

\vspace{2cm}

\newpage


\begin{thebibliography}{1}
\bibitem[1]{1} Ancona V and Ottaviani G: \textit{Stability of special instanton Bundles on $\PP^{2n+1}$}.
Transactions of the American Mathematical Society 341 (1994) 677 - 693. 
\href{https://doi.org/10.2307/2154578}{\color{blue}doi: 10.2307/2154578.}
%
\bibitem[2]{2} Barth W and Hulek K: \textit{Monads and moduli of vector bundles}.
Manuscripta mathematica, 25 (1978) 323-477. \href{https://doi.org/10.1007/BF01168047}{\color{blue}doi: 10.1007/BF01168047.}

%
%
\bibitem[3]{3} Costa L and Mir\'{o}-Roig R M. Monads and instanton bundles on smooth hyperquadrics. Mathematische Nachrichten, 282 (2009), no 2, 169-179. 
\href{https://onlinelibrary.wiley.com/doi/10.1002/mana.200610730}{\color{blue}doi: 10.1002/mana.2000610730.}
%

\bibitem[4]{4} Fl\o{}ystad G. Monads on a Projective Space. Communications in Algebra, 28 (2000), 5503 - 5516.
\href{https://www.tandfonline.com/doi/abs/10.1080/00927870008827171}{\color{blue}doi: 10.1080/00927870008827171.}
%

\bibitem[5]{5} Hartshorne R. Algebraic vector bundles on projective spaces: A problem list, Topology, 1979, vol 18, 117-128
\href{https://www.sciencedirect.com/science/article/pii/0040938379900302?via%3Dihub}{\color{blue}doi.org/10.1016/0040-9383(79)90030-2}
%

\bibitem[6]{6} Hoppe H. Generischer Spaltungstyp und zweite Chernklasse stabiler Vektorraumbündel. vom Rang 4 auf $\PP^4$ , Math. Z. 187 (1984), 345–360.
\href{http://gdz.sub.uni-goettingen.de/dms/resolveppn/?PPN=GDZPPN002429659}{\color{blue}eudml.org/doc/173496.}
%

\bibitem[7]{7} Horrocks G. Vector bundles on the punctured spectrum of a local ring. Proc. London Math.
Soc. 14, 689-713 (1964). \href{https://londmathsoc.onlinelibrary.wiley.com/doi/abs/10.1112/plms/s3-14.4.689}{\color{blue}doi.org/10.1112/plms/s3-14.4.689.}

%
\bibitem[8]{8} Jardim M and Earp HNS\'{a}. Monad construction of asymptotically stable Bundles. arXiv (September,2011).
\href{https://arxiv.org/pdf/1109.2750v1.pdf}{\color{blue}arxiv.org/pdf/1109.2750v1.pdf}

%
\bibitem[9]{9} Jardim M, Menet M, Prata D and Earp HNS\'{a}. Holomorphic bundles for higher dimensional gauge theory.
Bulletin London Mathematical society, 49 (2017). \href{https://doi.org/10.1112/blms.12017}{\color{blue}doi: 10.1112/blms.12017.}
%

%
\bibitem[10]{10} Maingi D. Vector Bundles of low rank on a multiprojective space. Le Matematiche. Vol. LXIX (2014) - Fasc. II. pp 31-41.
\href{https://lematematiche.dmi.unict.it/index.php/lematematiche/article/view/1048}{\color{blue}doi: 10.4418/2014.69.2.4.}
%
\bibitem[11]{11} Maingi D (2021). Indecomposable Vector Bundles associated to Monads on Cartesian products of projective spaces.
Turkish Journal of Mathematics. Vol. 45: No. 5. Article 17. Pages 2126-2139.
\href{https://doi.org/10.3906/mat-2101-6}{\color{blue}doi: 10.3906/mat-2101-6}

%
\bibitem[12]{12} Maingi D. Monads on multiprojective Products of Projective Spaces.
Manuscripta Mathematica (2023),\href{https://doi.org/10.1007/s00229-022-01449-0}{\color{blue}doi: 10.1007/s00229-022-01449-0}.

\bibitem[13]{13} Maingi D. Vector bundles associated to Monads on Cartesian Products of Projective Spaces.
Open Journal of Mathematical Sciences (2023),\href{https://doi.org/10.30538/oms2023.0203}{\color{blue}doi: 10.30538/oms2023-0203}.

\bibitem[14]{14} Maingi D. Vector bundle construction via monads on multiprojective Spaces.
\href{https://doi.org/10.48550/arXiv.2301.04932}{\color{blue}doi: 10.48550/arXiv.2301.04932}.


\bibitem[15]{15} Marchesi S, Marques P M and Soares H. Monads on a Projective Varieties. Pacific Journal of Mathematics, vol 296 (2018), no. 1, 155-180. 
\href{https://doi.org/10.2140/pjm.2018.296.155}{\color{blue}doi: 10.2140/pjm.2018.296.155.}
%

\bibitem[16]{16} Okonek C, Schneider M and Spindler H. Vector Bundles on Complex Projective Spaces.
Springer, 1980, \href{https://doi.org/10.1007/978-1-4757-1460-9}{\color{blue}doi.org/10.1007/978-1-4757-1460-9}
%
\bibitem[17]{17} Perrin D. G\'{e}om\'{e}trie alg\'{e}brique. Une introduction, (1995), EDP Sciences/CNRS \'{e}dition.
\href{https://www.decitre.fr/livres/geometrie-algebrique-9782729605636.html}{\color{blue}ISBN-2-7296-0563-0}
%



\end{thebibliography}
\end{document}